\newtheorem{theorem}{Theorem}[section]
\newtheorem{definition}[theorem]{Definition}
\newtheorem{corollary}[theorem]{Corollary}
\newtheorem{lemma}[theorem]{Lemma}
\newtheorem{example}[theorem]{Example}
\newtheorem{remark}[theorem]{Remark}
\theoremstyle{plain}
\newtheorem{thm}{Theorem}[section]
\newtheorem{prop}[thm]{Proposition}
\numberwithin{equation}{section}
\begin{document}
\begin{center}
\Large{\textbf{The  discrete collision-induced breakage equation with mass transfer: well-posedness and stationary solutions }}
\end{center}





\medskip
\medskip
\centerline{${\text{Mashkoor~ Ali$^*$}}$, ${\text{Ankik ~ Kumar ~Giri$^{\dagger}$}}$ and ${\text{Philippe~ Lauren\c{c}ot$^{\ddagger}$}}$}\let\thefootnote\relax\footnotetext{$^{\dagger}$Corresponding author. Tel +91-1332-284818 (O);  Fax: +91-1332-273560  \newline{\it{${}$ \hspace{.3cm} Email address: }}ankik.giri@ma.iitr.ac.in}
\medskip
{\footnotesize

  \centerline{ ${}^{}$  $*$,$\dagger$ Department of Mathematics, Indian Institute of Technology Roorkee,}
   \centerline{Roorkee-247667, Uttarakhand, India}

 \centerline{ ${}^{}$ $\ddagger$ Laboratoire de Math\'ematiques (LAMA) UMR 5127, Universit\'e Savoie Mont Blanc, CNRS,}
   \centerline{ F-73000, Chamb\'ery, France}
}

\bigskip

\begin{quote}
{\small {\em \bf Abstract.} The discrete collisional breakage equation, which captures the dynamics of cluster growth when clusters undergo binary collisions with possible matter transfer, is discussed in this article. The existence of global mass-conserving solutions is investigated for the collision kernels  $a_{i,j}=A(i^{\alpha} j^{\beta} + i^{\beta}j^{\alpha})$, $i, j \ge 1$,  with $\alpha \in (-\infty,1)$, $\beta\in [\alpha,1]\cap (0,1]$, and $A>0$ and for a large class of possibly unbounded daughter distribution functions.  All  algebraic superlinear moments of these solutions are bounded on time intervals $[T,\infty)$ for any $T>0$. The uniqueness issue is further handled under additional restrictions on the initial data. Finally, non-trivial stationary solutions are constructed by a dynamical approach.}
\end{quote}

\vspace{.3cm}

\noindent
{\rm \bf Mathematics Subject Classification(2020).} Primary: 34A12, 34C11, 37C25.\\

{ \bf Keywords:} Collision-induced fragmentation equation; Well-posedness; Mass-conservation; Propagation of moments;  Stationary solutions.\\

\section{Introduction}
Coagulation and fragmentation processes are present in a variety of scientific and engineering fields, including chemistry (such as water vapor condensing into liquid droplets), astrophysics (planet formation), atmospheric science (raindrop breakup), and biology (red blood cell aggregation), see \cite{LG 1976, SV 1972, SRC 1978}. These processes involve the coalescence of particles to form larger ones and the breakage of particles into smaller pieces. Coagulation is always a nonlinear process, while breakage can be categorized into linear breakage, caused by external forces or spontaneous processes, and collisional or nonlinear breakage, resulting from particle collisions. Linear breakage only produces smaller particles, while collisional breakage can transfer mass between colliding particles, resulting in larger daughter particles. In this work, each particle's volume or size is represented by a natural number, i.e. the ratio of the mass of the basic building block (monomer) to the size of a typical cluster is positive, and so the size of a cluster is a finite multiple of the size of the monomer.

 More precisely, denoting by $w_i(t)$, $i \ge 1$, the number of clusters made of $i$ monomers ($i$-particles) per unit volume at time $t \geq 0$, the discrete collision-induced breakage equation reads
\begin{align}
\frac{dw_i}{dt}  =&\frac{1}{2} \sum_{j=i+1}^{\infty} \sum_{k=1}^{j-1} B_{j-k,k}^i a_{j-k,k} w_{j-k} w_k -\sum_{j=1}^{\infty} a_{i,j} w_i w_j,  \hspace{.5cm} i \in \mathbb{N}, \label{NLDCBE}\\
w_i(0) &= w_i^{\rm{in}}, \hspace{.5cm} i \in \mathbb{N},\label{NLDCBEIC}
\end{align}
where $\mathbb{N}$ stands for the set of positive integers. Here $a_{i,j}$ denotes the rate of collisions of $i$-clusters with $j$-clusters and satisfies
\begin{align}
a_{i,j}=a_{j,i} \geq 0,\qquad (i,j) \in \mathbb{N}^2, \label{ASYMM}
\end{align}
 while the daughter distribution function $\{B_{i,j}^s, 1 \le s\le i+j-1\}$  gives the information about the average number of particles with size $s$ generated during the breakage events that occur due to collisions between particles having sizes $i$ and $j$. The first term on the right hand side of equation \eqref{NLDCBE} represent the formation of particles with size  $i$ resulting from  the collisional breakage of a pair of particles with sizes $(j-k)$ and $k$, while the last term describes the decay of particles with size $i$ due to their collisions with particles with arbitrary sizes $j\ge 1$. We make the assumption that there is no loss of matter during the breakup process, and that no particle is created with a size greater or equal than the total size of the colliding particles. This implies that the daughter distribution function $B$ fulfills the following conditions: for $(i,j)\in\mathbb{N}^2$, 
 
 \begin{equation}
 	\begin{split}
		& B_{i,j}^s = B_{j,i}^s \geq 0 , \quad 1 \le s \le i+j-1, \hspace{.7cm} B_{i,j}^s = 0 \hspace{.7cm} \text{for} \hspace{.7cm} s \ge i+j, \\
		& \sum_{s=1}^{i+j-1} s B_{i,j}^s = i+j . \label{LMC}
	\end{split}
\end{equation}
Observe that \eqref{LMC} implies that
\begin{equation}
	B_{1,1}^1=2. \label{LMC11}
	\end{equation}

The discrete coagulation equation with collisional breakage has been analyzed in \cite{Laurencot 2001I}, where the existence of weak solutions that conserve mass is established by a compactness approach. Additionally, the authors have investigated the uniqueness of solutions, long-term behavior of the system, as well as the possibility of gelation transition under certain conditions on the collision kernel and daughter distribution function. In \cite{VVF 2006}, under the assumption that the collision kernel is constant, partial analytical solutions to \eqref{NLDCBE}--\eqref{NLDCBEIC} are obtained and compared against corresponding Monte-Carlo simulations. Furthermore, a comparison is made between the dynamics of the population balance equation involving coalescence and collision-induced breakage and the one coupling aggregation with  spontaneous (linear) fragmentation, highlighting their similarities and differences. Several mathematical existence results are also available for the continuous collision-induced fragmentation equation when it is combined with coagulation,  and the coagulation is generally assumed to be the dominating mechanism, see \cite{PKB 2020, PKB 2020I, AKG 2021}.

Even though no coagulation is accounted for in \eqref{NLDCBE}, the collision-induced breakup monitored by a daughter distribution function $B$ satisfying \eqref{LMC} may produce particles with a larger size than the incoming particles, as the formation of fragments with sizes ranging in $\big[ \max\{i,j\}+1, i+j-1\big]$ is not precluded by \eqref{LMC} (for instance, the collision of a particle with size $i$ with a particle with size $j$ may produce a monomer and a particle with size $i+j-1$). This phenomenon, which we call mass transfer, could be viewed as incomplete coalescence and is prevented when the daughter distribution function $B$ is of the form 
\begin{align}
B_{i,j}^s = \textbf{1}_{[s, +\infty)} (i) b_{s,i;j} + \textbf{1}_{[s, +\infty)} (j) b_{s,j;i} \label{NMT}
\end{align}
for $i,j\geq 1$ and $s\in \{1,2,\cdots,i+j-1\},$ where $\textbf{1}_{[s, +\infty)}$ denotes the characteristic function of the interval $[s,+\infty)$. In this scenario, when two particles collide, one of them undergoes fragmentation into smaller pieces, without any mass transfer from the other particle.
The breakup kernel $b_{s,i;j}$ gives the rate at which particles with size $s$ are formed due to the collision of particles with sizes $i$ and $j$. It is evident that such a daughter distribution function satisfies \eqref{LMC} if, for $(i,j)\in \mathbb{N}^2$,
\begin{equation*}
\sum_{s=1}^{i-1} sb_{s,i;j} =i, \quad i\ge 2, \qquad b_{1,1;j} = 1.
\end{equation*}

The well-posedness of \eqref{NLDCBE}--\eqref{NLDCBEIC}, under the condition~\eqref{NMT}, is shown in \cite{MAP 23} for a wide range of collision and breakup kernels. In that case, it is also proved in \cite{MAP 23,Laurencot 2001I, SRC 1978} that only monomers remain in the long term, as expected for a fragmentation process. This simple large time behavior might be altered when mass transfer comes into play; that is, when $B$ satisfies~\eqref{LMC} instead of~\eqref{NMT}. The purpose of this work is thus to contribute to a better understanding of the dynamics of \eqref{NLDCBE}-\eqref{NLDCBEIC} with mass transfer and in particular to figure out whether non-trivial stationary solutions may exist. Actually, besides establishing the well-posedness of \eqref{NLDCBE}-\eqref{NLDCBEIC} when $\{B_{j,k}^i\ :\ 1 \le i \le j+k-1\}$ is not concentrated at sizes close to $j+k-1$, we also show that all algebraic superlinear moments become instantaneously finite and stay bounded for positive times. We also identify a class of kinetic coefficients for which there exist non-trivial stationary solutions $w^\star$ to \eqref{NLDCBE}; that is, $w^\star\ne C (\delta_{i,1})_{i\ge 1}$. The proof of this result is based on a dynamical approach, which essentially amounts to identify an appropriate functional framework in which the initial value problem \eqref{NLDCBE}--\eqref{NLDCBEIC} is well-posed, along with a compact, closed, and convex set $\mathcal{Z}$ which is positively invariant for the dynamical system corresponding to \eqref{NLDCBE}--\eqref{NLDCBEIC} (in the sense that $w(t) \in \mathcal{Z}$ for all $t>0$ as soon as $w^{\rm{in}}\in\mathcal{Z}$). If a fixed point theorem can be applied in this functional setting, then, as per a classical argument, at least one stationary solution is guaranteed to exist \cite{AMANN 90, GPV 2004}.

In this paper, we aim to broaden the applicability of the existence results to \eqref{NLDCBE}--\eqref{NLDCBEIC} for the collision kernels
\begin{subequations}\label{AGROWTH}
\begin{equation}\label{AGROWTHa}
 a_{i,j} = A (i^{\alpha}j^{\beta} + i^{\beta} j^{\alpha}), \hspace{.2cm} i,j\geq 1, \hspace{.2cm}\text{where} \hspace{.2cm} \alpha \le \beta \le 1  \hspace{.2cm} \text{and}~~~~ A>0 .
 \end{equation}
 We further assume that 
 \begin{equation}\label{AGROWTHb}
 \beta>0, ~~~~~\alpha<1.
 \end{equation}
\end{subequations}

As for the fragment diffusion function, in addition  to \eqref{LMC}, we assume that, for each $m>1$, there are constants $\epsilon_m\in(0,1)$ and $\kappa_m\ge 1$ such that
\begin{subequations}\label{BCond}
\begin{equation}
\sum_{i=1}^{j+k-1} i^m B_{j,k}^i \leq (1-\epsilon_m) (j^m +k^m) +\kappa_m(jk^{m-1} + j^{m-1}k).  \label{BConda}
\end{equation}
Also, we will put the restriction that, in each collision event, at least two particles are produced, i.e.,
\begin{equation}\label{BCondb}
\sum_{i=1}^{j+k-1}  B_{j,k}^i \ge 2 \qquad j,k\ge 1. 
\end{equation}
\end{subequations}

\begin{example}\label{ex1.1}
It is worth to mention at this point that \eqref{BCond} is satisfied in particular by the uniform daughter distribution function 
\begin{equation}
B_{j,k}^i = \frac{2}{j+k-1}, \quad 1 \le i \le j+k-1,\qquad j,k\geq1. \label{UDDF}
\end{equation} 
Indeed, thanks to the monotonicity and convexity of $x\mapsto x^m$, it follows from \cite[Lemma~7.4.1 and Lemma~7.4.2]{BLL 2019} that
\begin{align*}
\frac{2}{j+k-1}\sum_{i=1}^{j+k-1} i^m & \le \frac{2}{j+k-1}\sum_{i=1}^{j+k-1} \int_i^{i+1} x^m dx \le \frac{2}{j+k-1}\int_1^{j+k} x^m dx \\
& \le \frac{2}{m+1}  \frac{(j+k)^{m+1}-1}{j+k-1} = \frac{2}{m+1} \frac{(j+k)^m(j+k-1) + (j+k)^m - 1}{j+k-1} \\
& \le \frac{2}{m+1} \left[ (j+k)^m + \frac{m(j+k)^{m-1} (j+k-1)}{j+k-1} \right] \\
& \le \frac{2}{m+1} \big( j^m + k^m \big) + \frac{2}{m+1} \left[ (j+k)^m - j^m - k^m + m(j+k)^{m-1} \right] \\
& \le \frac{2}{m+1} \big( j^m + k^m \big) + \frac{2C(m)}{m+1} \big( j^{m-1} k + j k^{m-1} \big) \\
& \qquad + \frac{2^{1+(m-2)_+}}{m+1} \big( j^{m-1} + k^{m-1} \big) \\
& \le \frac{2}{m+1} \big( j^m + k^m \big) + C(m) \big( j^{m-1} k + j k^{m-1} \big),
\end{align*}
which gives~\eqref{BConda} with $\epsilon_m =\frac{m-1}{m+1}\in (0,1)$. The condition~\eqref{BCondb} is obviously satisfied since
\begin{equation*}
	\sum_{i=1}^{j+k-1} B_{j,k}^i = 2.
\end{equation*}
\end{example}

\begin{example}\label{ex1.2}
	We next consider the situation where the collision of two particles results in the complete disintegration of one of the incoming particles into monomers, one of these monomers then attaching to the other particle. In that case, the daughter distribution function $B$ is given by
	\begin{equation}
		\begin{split}
		& B_{1,1}^1 = 2, \qquad B_{1,j}^i = \delta_{i,1} + \delta_{i,j}, \quad 1 \le i \le j, \quad j\ge 2, \\
		& B_{j,k}^i = \frac{j+k-2}{2} \delta_{i,1} + \frac{1}{2} \left( \delta_{i,j+1} + \delta_{i,k+1} \right), \qquad 1 \le i \le j+k-1, \quad j,k\ge 2.
		\end{split} \label{Z6}
	\end{equation}
	Then, for $m>1$, it follows from the convexity of $x\mapsto x^m$ and Young's inequality that, for $j,k\ge 2$,
	\begin{align*}
		\sum_{i=1}^{j+k-1} i^m B_{j,k}^i & = \frac{j+k-2}{2} + \frac{(j+1)^m}{2} + \frac{(k+1)^m}{2} \\
		& \le \frac{j^m}{2m} + \frac{k^m}{2m} + \frac{m-1}{m} - 1 + \frac{j^m + m(j+1)^{m-1}}{2} + \frac{k^m + m(k+1)^{m-1}}{2} \\
		& \le \frac{m+1}{2m} \big( j^m + k^m \big) + m 2^{m-2} \big( j^{m-1} + k^{m-1} \big) \\
		& \le \frac{m+1}{2m} \big( j^m + k^m \big) + m 2^{m-2} \big( j^{m-1} k + j k^{m-1} \big),
	\end{align*}
	so that~\eqref{BConda} is satisfied with $\epsilon_m = \frac{m-1}{2m}\in (0,1)$ and $\kappa_m = m 2^{m-2}$. Also, \eqref{BCondb} is satisfied. Note that the fragment distribution function~\eqref{Z6} is unbounded, unlike the uniform one~\eqref{UDDF}.
\end{example}

\begin{remark}
	In contrast, the daughter distribution function
	\begin{equation*}
		B_{j,k}^i = \delta_{i,1} + \delta_{i,j+k-1}
	\end{equation*}
	does not satisfy~\eqref{BConda}, since
	\begin{equation*}
		\sum_{i=1}^{j+k-1} i^m B_{j,k}^i = 1 + (j+k-1)^m, \qquad j,k\ge 1, \quad m>1.
	\end{equation*}
\end{remark}

We expect the density $\sum_{i=1}^{\infty} i w_i$ to be conserved because particles are neither generated nor destroyed in the interactions represented by \eqref{NLDCBE}. This is mathematically equivalent to
\begin{align}
 \sum_{i=1}^{\infty} iw_i(t) = \sum_{i=1}^{\infty} iw_i^{\rm{in}}, \qquad t\ge 0.\label{MCC}
\end{align}
In other words, the density of the solution $w$ remains constant for all times $t>0$.

\subsection{Outline}
Let us now describe the contents of the paper: Section~\ref{SEC2} introduces the definition of the solution to the discrete collision-induced breakage equations  together with the statements of our main results. In Section~\ref{SEC3}, we provide finite dimensional systems of ordinary differential equations which approximate \eqref{NLDCBE}--\eqref{NLDCBEIC}, as well as moments estimates for their solutions and the proof of the existence of mass-conserving solutions to \eqref{NLDCBE}--\eqref{NLDCBEIC} (Theorem~\ref{TH1}). Section~\ref{SEC4} discusses the continuous dependence of solutions on the initial data, a partial uniqueness result, and the proof of Theorem~\ref{TH2}. Moving on to Section~\ref{SEC5}, we focus on the existence of non-trivial stationary solutions to \eqref{NLDCBE} (Theorem~\ref{TH3}) by using a combination of a dynamical approach and a compactness method. 

\section{Main results}\label{SEC2}

\subsection{Notation}
For $\gamma_0 \geq 0$, let $Y_{\gamma_0}$ be the Banach space defined by
\begin{align*}
Y_{\gamma_0} = \Big\{ y =(y_i)_{i\in\mathbb{N}}: y_i \in \mathbb{R}, \sum_{i=1}^{\infty} i^{\gamma_0} |y_i| < \infty \Big\}
\end{align*}
with the norm 
\begin{align*}
\|y\|_{\gamma_0} =\sum_{i=1}^{\infty} i^{\gamma_0} |y_i|, \qquad y\in Y_{\gamma_0}.
\end{align*}

We will use the positive cone $Y_{\gamma_0}^+$ of $Y_{\gamma_0}$, that is,
\begin{align*}
Y_{\gamma_0}^+ =\{y\in Y_{\gamma_0}: ~~y_i \geq 0~~\text{for each}~~ i\geq 1\},
\end{align*}
and we denote the moment of order $\gamma_0$ by 
\begin{align*}
M_{\gamma_0}(w) =\sum_{i=1}^{\infty} i^{\gamma_0} w_i, \qquad w\in Y_{\gamma_0}^+.
\end{align*}

It is worth noting that the quantity $M_0(w)$ of a particular cluster distribution $w$ represents the total number of clusters present in the system, and the quantity $M_1(w)$ estimates the overall density or mass of the cluster distribution $w$.

\subsection{Weak solutions}

Let us now define what we mean by a solution to \eqref{NLDCBE}--\eqref{NLDCBEIC}.

\begin{definition} \label{DEF1}
Let $T\in(0,+\infty]$ and $w^{\rm{in}}= (w_{i}^{\rm{in}})_{i \geq 1}\in Y_1^+$ be a sequence of non-negative real numbers. A solution $w=(w_i)_{i \geq 1} $ to \eqref{NLDCBE}--\eqref{NLDCBEIC} on $[0,T)$ is a sequence of non-negative continuous functions satisfying for each $i\geq 1$ and $t\in(0,T)$ 
\begin{enumerate}
\item $w_i\in \mathcal{C}([0,t))$, $\sum_{j=1}^{\infty}  a_{i,j} w_j \in L^1(0,t)$, $ \sum_{j=i+1}^{\infty}\sum_{k=1}^{j-1}B_{j-k,k}^i a_{j-k,k} w_{j-k} w_k \in L^1(0,t)$,
\item and there holds 
\begin{align}
w_i(t) = w_{i}^{\rm{in}} + \int_0^t \Big(\frac{1}{2} \sum_{j=i+1}^{\infty} \sum_{k=1}^{j-1} B_{j-k,k}^i a_{j-k,k} w_{j-k}(\tau) w_k(\tau) -\sum_{j=1}^{\infty} a_{i,j} w_i(\tau) w_j(\tau) \Big) d\tau. \label{IVOE}
\end{align}
\end{enumerate}
The solution $w$ is said to be mass-conserving on $[0,T)$ if 
\begin{align}
M_1(w(t)) = M_1(w^{\rm{in}}),\qquad t\in[0,T). \label{MC}
\end{align}
\end{definition}

\subsection{Well-posedness}

We first deal with the existence of global mass-conserving solutions to \eqref{NLDCBE}--\eqref{NLDCBEIC} when the kinetic coefficients satisfy \eqref{LMC}, \eqref{AGROWTH} and \eqref{BCond}.

\begin{theorem} \label{TH1}
Assume that the kinetic coefficients $(a_{i,j})$ and $(B_{j,k}^i)$ satisfy \eqref{LMC}, \eqref{AGROWTH} and \eqref{BCond}. Given $w^{\rm{in}} \in Y_1^+$, there is at least one mass-conserving solution $w$ to \eqref{NLDCBE}--\eqref{NLDCBEIC} on $[0,+\infty)$ such that
\begin{equation}
M_0(w(t) \ge M_0(w^{\rm{in}}), \qquad t\ge 0, \label{ZEROMOM}
\end{equation}
and, for any $m>1$ and $t>0$, $w(t)\in Y_m$ with
\begin{equation}
M_m(w(t)) \le F_m\Big(1+ \frac{1}{t}\Big)^{\frac{m-1}{\beta}},\label{MMOMTIM}
\end{equation}
where $F_m$ is a positive constant depending only on $A$, $\alpha$, $\beta$, in \eqref{AGROWTHa}, $\epsilon_m$, $\kappa_m$ in \eqref{BConda}, $M_1(w^{\rm{in}})$ and $M_0(w^{\rm{in}})$.\\
In addition, for every $m>1$ and $\rho_1 \ge \rho_0 >0$, there is $\mu_m>0$ depending only on $A$, $\alpha$, $\beta$, $\epsilon_m$, $\kappa_m$, $\rho_1$ and $\rho_0$ such that if $w^{\rm{in}} \in Y_m$ , with $M_1(w^{\rm{in}}) \le \rho_1$ and $M_0(w^{\rm{in}}) \ge \rho_0$, then 
\begin{equation}
M_m(w(t)) \le \max\big\{M_m(w^{\rm{in}}), \mu_m\big\}, \qquad t>0.\label{MMOMTIMI}
\end{equation}
\end{theorem}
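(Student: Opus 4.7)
The plan is an approximation argument combined with a compactness-based passage to the limit. For each integer $N\ge 2$, I would introduce the finite-dimensional truncation $w^N=(w_i^N)_{1\le i\le N}$ obtained by keeping only the first $N$ equations of \eqref{NLDCBE}, removing from the loss term in $w_i^N$ those collisions that would produce fragments of size larger than $N$, and truncating the initial data. The resulting ODE system has polynomial nonlinearities that are Lipschitz on bounded sets, and is designed so that $M_1(w^N(\cdot))\equiv M_1(w^{\mathrm{in},N})$; hence $w^N$ is nonnegative and defined on $[0,\infty)$.

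The heart of the argument is the derivation of the $m$-th moment estimate uniformly in $N$. Summing \eqref{NLDCBE} against $i^m$ and reindexing the gain term yields
$$
\frac{d}{dt} M_m(w^N) = \frac{1}{2}\sum_{j,k\ge 1} a_{j,k} w_j^N w_k^N \sum_{s=1}^{j+k-1} s^m B_{j,k}^s - \sum_{i,j} i^m a_{i,j} w_i^N w_j^N,
$$
which after using the symmetry of $a_{i,j}$ and the hypothesis \eqref{BConda} becomes
$$
\frac{d}{dt}M_m(w^N) \le -\frac{\epsilon_m}{2}\sum_{j,k}a_{j,k}(j^m+k^m)w_j^N w_k^N + \kappa_m\sum_{j,k}a_{j,k}\, j\, k^{m-1}\, w_j^N w_k^N.
$$
Expanding $a_{j,k}=A(j^\alpha k^\beta + j^\beta k^\alpha)$, the loss dominates $A\epsilon_m\, M_\alpha(w^N) M_{m+\beta}(w^N)$, while the positive terms split into products $M_p(w^N)M_q(w^N)$ with $p\le m-1+\beta<m+\beta$ and $q\le 1+\beta$. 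Since $\alpha<1$, the moment $M_\alpha(w^N)$ is controlled by $M_0(w^{\mathrm{in}})$ and $M_1(w^{\mathrm{in}})$ via H\"older interpolation (trivially if $\alpha\le 0$), and all the positive-term moments can be interpolated between $M_1$ and $M_{m+\beta}$ with a total exponent on $M_{m+\beta}$ strictly less than one. Together with the Jensen-type bound $M_{m+\beta}\ge M_m^{1+\beta/(m-1)} M_1^{-\beta/(m-1)}$ coming from log-convexity of moments, this produces a differential inequality of super-linear-absorption type
$$
\frac{d}{dt} M_m(w^N) \le C_1 - C_2\, M_m(w^N)^{1+\beta/(m-1)},
$$
whose constants depend only on $A,\alpha,\beta,\epsilon_m,\kappa_m,M_0(w^{\mathrm{in}}),M_1(w^{\mathrm{in}})$. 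Comparison with the scalar ODE $y'=-C_2 y^{1+\beta/(m-1)}$ produces the smoothing estimate \eqref{MMOMTIM} uniformly in $N$; and if $w^{\mathrm{in}}\in Y_m$ the dichotomy according to whether $M_m(w^N(t))$ lies below or above the threshold $\mu_m=(C_1/C_2)^{(m-1)/(m+\beta-1)}$ gives \eqref{MMOMTIMI}. The lower bound \eqref{ZEROMOM} follows from
$$
\frac{d}{dt}M_0(w^N) = \frac{1}{2}\sum_{j,k} a_{j,k} w_j^N w_k^N \Bigl(\sum_{i=1}^{j+k-1} B_{j,k}^i - 2\Bigr)\ge 0
$$
by \eqref{BCondb}.

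With these $N$-uniform moment bounds, equicontinuity of each component $w_i^N$ on every compact time interval follows from bounding $|dw_i^N/dt|$ by products of superlinear moments, and a diagonal Arzel\`a--Ascoli extraction yields a limit $w$ which is componentwise continuous and nonnegative. The tail estimate
$$
\sum_{i\ge K} i\, w_i^N(t) \le K^{1-m} M_m(w^N(t))
$$
delivers the tightness required both to pass to the limit in the series appearing in \eqref{IVOE} and to establish mass conservation $M_1(w(t))=M_1(w^{\mathrm{in}})$; the moment bounds then transfer to the limit by lower semicontinuity. The main obstacle I anticipate is the derivation of the super-linear-absorption inequality with the correct exponent $1+\beta/(m-1)$: one must carefully organize which positive contributions arising from mass transfer can be interpolated between $M_1$ and $M_{m+\beta}$ with overall sublinear dependence on $M_{m+\beta}$, and this uses in an essential way both the strict positivity $\epsilon_m>0$ in \eqref{BConda} (which is what produces the negative symmetrized leading term) and the restrictions $\beta>0$, $\alpha<1$ from \eqref{AGROWTHb} that make the control of $M_\alpha$ by $M_0$ and $M_1$ possible.
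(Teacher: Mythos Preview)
Your derivation of the differential inequality $\frac{d}{dt}M_m(w^N)\le C_1-C_2\, M_m(w^N)^{1+\beta/(m-1)}$ is correct and mirrors the paper's argument (Lemma~\ref{MBLemma}): the dissipative contribution yields $M_\alpha\, M_{m+\beta}$, the lower bound on $M_\alpha$ comes from $M_0$ and $M_1$ via H\"older (Lemma~\ref{ZEROBNDLEM}), the positive terms are interpolated between $M_1$ and $M_{m+\beta}$ with total exponent strictly below~$1$, and the log-convexity bound $M_{m+\beta}\ge M_1^{-\beta/(m-1)}M_m^{(m+\beta-1)/(m-1)}$ closes the inequality. Both \eqref{MMOMTIM} and \eqref{MMOMTIMI} then follow by comparison, and \eqref{ZEROMOM} follows from \eqref{BCondb} as you say.

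The genuine gap is in the compactness step near $t=0$. For a general $w^{\rm{in}}\in Y_1^+$ (not belonging to any $Y_m$ with $m>1$), the only uniform-in-$N$ bound on $M_m(w^N(t))$ is $F_m(1+1/t)^{(m-1)/\beta}$, which blows up as $t\to 0$. Hence your tail estimate $\sum_{i\ge K} i\,w_i^N(t)\le K^{1-m}M_m(w^N(t))$ provides no control at $t=0$, and---more seriously---you cannot pass to the limit in the \emph{gain} term of \eqref{IVOE}: bounding $B_{j,k}^i\le(j+k)/i$ and expanding $a_{j,k}$ forces the appearance of $\int_0^t M_{1+\beta}(w^N(\tau))\,d\tau$, whose integrand behaves like $1/\tau$. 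Choosing $m$ slightly above~$1$ makes $\int_0^t M_m$ finite but then $K^{1-m}$ gives almost no decay; choosing $m>1+\beta$ restores the $K$-decay but destroys integrability at~$\tau=0$. The same obstruction undermines your equicontinuity claim on intervals containing~$0$.

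The paper confronts this explicitly (``the previous study of the time evolution of superlinear moments is not sufficient due to the singularity in \eqref{FOM} as $t\to 0$'') and repairs it with the de~la~Vall\'ee~Poussin theorem: there is a convex superlinear $G_0\in\mathcal{G}_{1,\infty}$ with concave derivative such that $\sum_i G_0(i)w_i^{\rm{in}}<\infty$. Lemma~\ref{prop1} then shows that $\sum_i G_0(i)w_i^N(t)\le\Lambda(T)$ uniformly on $[0,T]$ and, simultaneously, gives a uniform $L^1(0,T)$ bound on the weighted tails of the gain term (estimate~\eqref{PROPEQN2}). The key point is that $G_0(\zeta)\le\sigma_p(G_0)\,\zeta^p$ for some $p\in(1,2]$ chosen close to~$1$, so that the right-hand side is controlled by $M_\lambda$ with a suitable $\lambda<1+\beta$, which \emph{is} integrable near~$0$ by \eqref{FOM}. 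Compactness is then obtained through Helly's theorem (bounded variation) rather than Arzel\`a--Ascoli. Your scheme can be completed by inserting this de~la~Vall\'ee~Poussin propagation step.
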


Besides settling the existence issue for \eqref{NLDCBE}--\eqref{NLDCBEIC}, Theorem~\ref{TH1} also shows that any algebraic superlinear moment of the constructed solution is finite for positive times. Furthermore, it follows from~\eqref{MMOMTIMI} that some balls in $Y_m$ are positively invariant for the dynamics of \eqref{NLDCBE}--\eqref{NLDCBEIC}.
	
\begin{remark}
	Unfortunately, the classical multiplicative collision kernel $a_{i,j}=Aij$ (corresponding to $\alpha=\beta=1$) is excluded from our analysis due to the assumption~\eqref{AGROWTHb}. It is yet unclear whether Theorem~\ref{TH1} extends to that case as well.
\end{remark}

We next supplement Theorem~\ref{TH1} with a uniqueness result for initial conditions in a smaller space.

\begin{theorem} \label{TH2}
Assume that the kinetic coefficients $(a_{i,j})$ and $(B_{j,k}^i)$ satisfy \eqref{LMC}, \eqref{AGROWTH} and \eqref{BCond}. Given $w^{\rm{in}} \in Y_1^+ \cap Y_{1+\beta}$, there is a unique mass-conserving solution $w=\Psi(\cdot;w^{\rm{in}})$ to \eqref{NLDCBE}--\eqref{NLDCBEIC} on $[0,+\infty)$ such that $w\in L^{\infty}((0,+\infty), Y_{1+\beta}) $ and $w$ satisfies \eqref{ZEROMOM}, \eqref{MMOMTIM} and \eqref{MMOMTIMI}. In addition, given $0<\rho_0 \le \rho_1$ and $R\ge\mu_{1+\beta}$, $\Psi(\cdot;w^{\rm{in}})$ is a dynamical system in 
\begin{align}
\mathcal{S}(\rho_0, \rho_1,R) := \Big\{y \in Y_1^+: \rho_0 \le M_0(y) \le M_1(y)= \rho_1 ~~~~\text{and}~~~~M_{1+\beta}(y) \le R \Big\} \label{SPACE}
\end{align} 
 for the topology induced by the norm of $Y_1$.
 \end{theorem}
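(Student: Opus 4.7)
The plan is to deduce existence and the moment bounds directly from Theorem~\ref{TH1}, then establish uniqueness via a Gronwall argument in $Y_1$, and finally assemble the dynamical-system properties on $\mathcal{S}(\rho_0,\rho_1,R)$ from these ingredients.

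Since $w^{\rm{in}} \in Y_1^+ \cap Y_{1+\beta}$, applying Theorem~\ref{TH1} with $m=1+\beta$ produces a mass-conserving solution $w$ satisfying \eqref{ZEROMOM}, \eqref{MMOMTIM} and \eqref{MMOMTIMI}. When $w^{\rm{in}} \in \mathcal{S}(\rho_0,\rho_1,R)$, the choice $R\ge \mu_{1+\beta}$ makes \eqref{MMOMTIMI} yield $M_{1+\beta}(w(t))\le R$ for all $t\ge 0$, so $w \in L^\infty((0,+\infty), Y_{1+\beta})$. Hence only uniqueness (among solutions with $w \in L^\infty((0,+\infty), Y_{1+\beta})$) and the semi-group/continuity statements on $\mathcal{S}$ remain.

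For uniqueness, let $w,\tilde{w}$ be two such solutions and set $e_i := w_i - \tilde{w}_i$, $u_i := w_i + \tilde{w}_i$. The symmetric factorization $w_j w_k - \tilde{w}_j \tilde{w}_k = \tfrac{1}{2}(u_j e_k + u_k e_j)$ turns \eqref{NLDCBE} into
\begin{equation*}
\frac{de_i}{dt} = \frac{1}{4}\sum_{j,k\ge 1} B_{j,k}^i a_{j,k}(u_j e_k + u_k e_j) - \frac{1}{2}\sum_{j\ge 1} a_{i,j}(u_i e_j + u_j e_i).
\end{equation*}
I plan to test against $i\,\sgn(e_i)$ (made rigorous by a truncation of the outer sum, or equivalently by working directly with the integral form \eqref{IVOE}), sum in $i$, and invoke \eqref{LMC} in the form $\sum_i i B_{j,k}^i = j+k$. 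After interchanging the orders of summation and using the $j\leftrightarrow k$ symmetry of $a_{j,k}$ and $B_{j,k}^i$, the ``$j$-part'' of the gain exactly cancels the ``$-u_j e_i$'' part of the loss, so only one residual survives:
\begin{equation*}
\frac{d}{dt}\|e(t)\|_1 \le \sum_{j,k\ge 1} a_{j,k}\, k\, u_k(t)\, |e_j(t)| = A\,M_{1+\beta}(u(t))\,M_\alpha(|e(t)|) + A\,M_{1+\alpha}(u(t))\,M_\beta(|e(t)|).
\end{equation*}
Since $\alpha,\beta \le 1$ one has $M_\alpha(|e|),M_\beta(|e|) \le \|e\|_1$, while mass conservation gives $M_1(u(t))=2\rho_1$ and the $L^\infty$ bound on $M_{1+\beta}(u)$ yields $M_{1+\beta}(u(t))\le 2R$ and $M_{1+\alpha}(u(t))\le 2\max\{\rho_1,R\}$ (splitting into $\alpha\le 0$ and $0\le \alpha\le \beta$). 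This produces $\tfrac{d}{dt}\|e(t)\|_1 \le C\|e(t)\|_1$ with $C$ independent of $t$, so $\|e(0)\|_1 = 0$ and Gronwall force $w \equiv \tilde{w}$.

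The dynamical-system properties of $\Psi$ on $\mathcal{S}(\rho_0,\rho_1,R)$ then follow readily. Positive invariance is immediate from \eqref{ZEROMOM}, the mass-conservation identity \eqref{MC}, and \eqref{MMOMTIMI}; the semi-group identity $\Psi(t+s;\cdot) = \Psi(t;\Psi(s;\cdot))$ is a consequence of uniqueness; continuity in time is part of Definition~\ref{DEF1}; and Lipschitz continuous dependence on the initial datum in $Y_1$ on bounded time intervals is obtained by repeating the Gronwall argument above with $\|e(0)\|_1\ne 0$, which gives $\|\Psi(t;w_1^{\rm{in}}) - \Psi(t;w_2^{\rm{in}})\|_1 \le e^{Ct}\|w_1^{\rm{in}} - w_2^{\rm{in}}\|_1$ with $C$ depending only on $A$, $\alpha$, $\beta$, $\rho_1$ and $R$. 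The main obstacle is the cancellation step in the uniqueness proof: only with the specific weight $i$---the very weight appearing in the mass-conservation identity \eqref{LMC}---do the worst contributions of the gain and the loss line up and cancel, leaving a residual whose only super-linear moment falls on $u$ (controlled by $R$) while the moments falling on $|e|$ stay sub-linear (controlled by $\|e\|_1$ because $\alpha,\beta\le 1$).
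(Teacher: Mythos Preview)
Your proposal is correct and follows essentially the same route as the paper: existence and the moment bounds come from Theorem~\ref{TH1}, and uniqueness/continuous dependence are obtained by testing the difference equation against $i\,\sgn(e_i)$, using $\sum_s s B_{i,j}^s = i+j$ to effect the cancellation and close a Gronwall inequality in $Y_1$ (the paper packages this as a separate continuous-dependence proposition, with an explicit truncation $\sum_{i\le l}$ and a dominated-convergence argument for the tail terms, which is exactly the ``made rigorous by a truncation'' step you allude to). The resulting estimate $\tfrac{d}{dt}\|e\|_1 \le \sum_{j,k} k\,a_{j,k}\,u_k\,|e_j| \le 4AR\,\|e\|_1$ and the assembly of the dynamical-system properties on $\mathcal{S}(\rho_0,\rho_1,R)$ match the paper's argument.
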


\subsection{Stationary solutions}

The last result deals with the existence of non-trivial stationary solutions to \eqref{NLDCBE}. Let us first point out that, given $\rho_1>0$, the sequence $\bar{w}_{\rho_1} := \rho_1 (\delta_{i,1})_{i\ge 1}$ is a stationary solution to~\eqref{NLDCBE} with total mass $M_1(\bar{w}_{\rho_1}) = \rho_1$ and it follows from~\cite{MAP 23, Laurencot 2001I} that it is the only one with that mass when $B$ satisfies additionally~\eqref{NMT}. It is thus of interest to figure out whether mass transfer gives rise to other stationary solutions and a contribution in that direction is reported in the next result.

\begin{theorem} \label{TH3}
Assume that the kinetic coefficients $(a_{i,j})$ and $(B_{j,k}^i)$ satisfy \eqref{LMC}, \eqref{AGROWTH} and \eqref{BCond}. 
\begin{itemize}
\item[(a)] Assume further that
\begin{equation}
	\sum_{i=1}^{j+k-1} B_{j,k}^i = 2, \qquad j,k \ge 1. \label{Z1}
\end{equation}
Then, given $\rho_1 > \rho_0 > 0$, there is at least one stationary solution $w^{\star} \in Y_1^+$ to \eqref{NLDCBE} satisfying 
\begin{equation}
	M_1(w^{\star}) = \rho_1 > \rho_0 = M_0(w^{\star}) \label{Z2}
\end{equation} 
and $w^{\star}\in \bigcap_{m\ge 1} Y_m$.
\item[(b)] Assume further that
\begin{equation}
\sum_{i=2}^j B_{1,j}^i \ge 1, \quad \sum_{i=2}^{j+k-1} B_{j,k}^i \ge 2, \qquad j,k\ge 2. \label{Z3}
\end{equation}
Then, given $\rho_1 > \rho_0 > 0$, there is at least one stationary solution $w^{\star} \in Y_1^+$ to \eqref{NLDCBE} satisfying $w^{\star}\in \bigcap_{m\ge 1} Y_m$ and
\begin{equation}
	M_1(w^\star) = \rho_1, \quad M_0(w^\star)\ge \rho_0 \;\;\text{ and }\;\; w_1^\star <  M_0(w^\star). \label{Z4}
\end{equation}	
\end{itemize}
\end{theorem}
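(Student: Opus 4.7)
My plan is to carry out the dynamical approach sketched in the Introduction. Theorem~\ref{TH2} already supplies a continuous semiflow $\Psi(\cdot;\cdot)$ on $\mathcal{S}(\rho_0,\rho_1,R)$ for every $R\ge\mu_{1+\beta}$, so the task reduces to producing a common fixed point of all the maps $\Psi(t,\cdot)$, $t>0$. I would first observe that $\mathcal{S}(\rho_0,\rho_1,R)$ is convex and compact in the strong topology of $Y_1$: the uniform bound $M_{1+\beta}(y)\le R$ gives the tail estimate $\sum_{i\ge N}iy_i\le RN^{-\beta}$, componentwise boundedness comes from $M_1(y)=\rho_1$, and a standard diagonal extraction combining these two facts yields strong $Y_1$-convergence of any sequence in $\mathcal{S}$ along a subsequence. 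Closedness follows since $M_1$ is continuous, $M_0$ is actually continuous on $\{M_1=\rho_1\}$ (because $\sum_{i\ge N}y_i\le\rho_1/N$ gives a uniform tail bound on $M_0$ too), and $M_{1+\beta}$ is lower semicontinuous.

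With compactness in hand, Schauder's theorem applied to the continuous self-map $\Psi(t,\cdot):\mathcal{S}\to\mathcal{S}$ yields, for each $t>0$, a fixed point $w^{[t]}\in\mathcal{S}$. To promote this to a time-independent equilibrium, I would pick a sequence $t_n\downarrow 0$, extract (by compactness) a $Y_1$-limit $w^{[t_n]}\to w^\star\in\mathcal{S}$, and run the classical iteration argument: for any $t>0$ and $k_n:=\lfloor t/t_n\rfloor$, the iterated fixed-point identity gives $\Psi(k_nt_n,w^{[t_n]})=w^{[t_n]}$; since $k_nt_n\to t$ and $(s,y)\mapsto\Psi(s,y)$ is jointly continuous in $Y_1$-norm (continuity in $y$ is Theorem~\ref{TH2}; continuity in $s$ follows from Definition~\ref{DEF1} combined with the uniform tail bound inherited from $M_{1+\beta}\le R$), passage to the limit yields $\Psi(t,w^\star)=w^\star$. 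Applying~\eqref{MMOMTIM} to $w^\star=\Psi(t,w^\star)$ and letting $t\to+\infty$ then gives $w^\star\in\bigcap_{m\ge 1}Y_m$.

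For part~(a), summing~\eqref{NLDCBE} in $i$, swapping the order of summation via $(p,q)=(j-k,k)$, and using~\eqref{Z1} produces
\begin{equation*}
\frac{dM_0}{dt}=\tfrac{1}{2}\sum_{p,q\ge 1}a_{p,q}w_pw_q\Bigl(\sum_{i=1}^{p+q-1}B_{p,q}^i-2\Bigr)=0,
\end{equation*}
so $M_0$ is conserved along trajectories; consequently $\mathcal{S}\cap\{M_0=\rho_0\}$ is closed, convex, compact, non-empty and positively invariant, and the procedure above delivers~\eqref{Z2}. For part~(b), the corresponding identity for $M_0-w_1$ is
\begin{equation*}
\frac{d}{dt}\bigl(M_0-w_1\bigr)=\tfrac{1}{2}\sum_{p,q\ge 1}a_{p,q}w_pw_q\Bigl(\sum_{i=2}^{p+q-1}B_{p,q}^i-2\Bigr)+w_1\sum_{j\ge 1}a_{1,j}w_j,
\end{equation*}
and the crucial observation is that, under~\eqref{Z3}, the bracketed factor is $\ge 0$ for $p,q\ge 2$, $\ge -1$ when $\min(p,q)=1<\max(p,q)$, and equals $-2$ for $(p,q)=(1,1)$, so the sum of negative contributions is bounded below by $-a_{1,1}w_1^2-\sum_{j\ge 2}a_{1,j}w_1w_j=-w_1\sum_{j\ge 1}a_{1,j}w_j$, which is exactly cancelled by the dissociation term. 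Hence $M_0-w_1$ is non-decreasing along trajectories, and for any $\delta\in\bigl(0,\min\{\rho_1-\rho_0,\rho_1/2\}\bigr)$ the set $\mathcal{S}(\rho_0,\rho_1,R)\cap\{y\in Y_1^+:\sum_{i\ge 2}y_i\ge\delta\}$ is closed, convex, compact, positively invariant and non-empty (take $y_1=\rho_1-2\delta$, $y_2=\delta$, $y_i=0$ for $i\ge 3$); the same Schauder-plus-limit argument then yields $w^\star$ with $w_1^\star\le M_0(w^\star)-\delta<M_0(w^\star)$, giving~\eqref{Z4}.

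The main obstacles I anticipate are the strong compactness of $\mathcal{S}$ in $Y_1$, which hinges on careful exploitation of the $Y_{1+\beta}$ bound through tail control, and the delicate algebraic cancellation in the $M_0-w_1$ identity: the lower bounds built into~\eqref{Z3} are tight enough that the negative contributions from monomer-involving collisions are exactly balanced by the monomer loss term, and nothing weaker than~\eqref{Z3} would allow this balance, which is precisely what forces the stronger hypothesis in part~(b).
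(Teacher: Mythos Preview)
Your proposal is correct and follows essentially the same route as the paper. The paper invokes a ready-made fixed-point result for dynamical systems (\cite[Proposition~22.13]{AMANN 90}, \cite[Theorem~5.2]{GPV 2004}) on a compact convex positively invariant set, whereas you spell out its proof via Schauder applied to each $\Psi(t,\cdot)$ followed by a diagonal limit $t_n\downarrow 0$; this is exactly how that abstract result is proved. Your derivation of the monotonicity of $M_0-w_1$ under~\eqref{Z3} is the content of the paper's Lemma~\ref{LemZ}, and your cancellation bookkeeping matches theirs. The only cosmetic difference is that the paper builds the bounds $M_m\le \mu_m+C$ for all $m>1$ directly into the invariant set $\mathcal{Z}$, while you work with the single constraint $M_{1+\beta}\le R$ and recover $w^\star\in\bigcap_{m\ge 1}Y_m$ a posteriori from~\eqref{MMOMTIM}; both are valid and yield the same conclusion.
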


Owing to~\eqref{Z2} and~\eqref{Z4}, the stationary solution $w^\star$ constructed in~Theorem~\ref{TH3} differs from $\bar{w}_{\rho_1}$, so that we have constructed non-trivial stationary solutions to~\eqref{NLDCBE}. Furthermore, in case~(a), it follows from~\eqref{Z2} that there is at least infinitely many different stationary solutions with a given total mass $\rho_1$. This is in particular the case for the uniform daughter distribution function given by~\eqref{UDDF} which obviously satisfies~\eqref{Z1}. An interesting issue is then whether~\eqref{Z2} determines uniquely the corresponding stationary solution and whether the latter attracts the dynamics. Finally, while Theorem~\ref{TH3}~(b) does not apply to the fragment distribution function defined by~\eqref{Z6} as it fails to satisfy~\eqref{Z4}, it applies to the daughter distribution function
\begin{equation*}
	\begin{split}
		& B_{1,1}^1 = 2, \qquad B_{1,j}^i = \delta_{i,1} + \delta_{i,j}, \quad 1 \le i \le j, \quad j\ge 2, \\
		& B_{2,2}^2 = 2, \qquad B_{2,j}^i = \delta_{i,2} + \delta_{i,j}, \quad 1 \le i \le j+1, \quad j\ge 3, \\
		& B_{j,k}^i = \frac{j+k-6}{2} \delta_{i,1} + \delta_{i,2} + \frac{1}{2} \big( \delta_{i,j+1} + \delta_{i,k+1} \big), \qquad 1 \le i \le j+k-1, \quad j,k\ge 3.
	\end{split} 
	\end{equation*}
For such a daughter distribution function, the collision of two clusters with respective sizes $j\ge 2$ and $k\ge 2$ always produces a cluster with size~$2$ and a cluster with size larger or equal to $2$, so that~\eqref{Z4} is satisfied. A similar argument as in Example~\ref{ex1.2} shows that it satisfies~\eqref{BCond}.

\section{Existence} \label{SEC3}
In this section, we fix kinetic coefficients $(a_{i,j})$ and $(B_{j,k}^i)$ satisfying  \eqref{LMC}, \eqref{AGROWTH}, \eqref{BCond} and $w^{\rm{in}} \in Y_1^+$, $w^{\rm{in}}\not\equiv 0$. We also fix $\rho_1 \ge \rho_0>0$ such that 
\begin{equation}
M_1(w^{\rm{in}}) \le \rho_1 \qquad \text{and} \qquad M_0(w^{\rm{in}}) \ge \rho_0. \label{M10BND}
\end{equation}

\subsection{Approximation}

For $l \ge 1$ , we define a sequence of approximations of $w^{in}$ and $a_{i,j}$ by 
\begin{equation}
w^{\rm{in},l} = w^{\rm{in}} \textbf{1}_{[1,l]}, \label{Z5}
\end{equation}
and
\begin{equation}
a_{i,j}^l = A \big(\min\{i,l\}^{\alpha_+} i^{\alpha-\alpha_+} j^{\beta} + i^{\beta} \min\{j,l\}^{\alpha_+} j^{\alpha-\alpha_+}\big)\le Al (i+j), \qquad i,j\ge 1, \label{ASSUM3}
\end{equation}
where $\alpha_+ = \max\{ \alpha , 0\}$.

Owing to~\eqref{ASSUM3}, the collision kernel $(a_{i,j}^l)$ has linear growth. We may then use \cite[Theorem~3.1 and Proposition~3.7]{Laurencot 2001I} to establish the existence of a solution $w^l$ to
\begin{align}
\frac{dw_i^l}{dt}  =&\frac{1}{2} \sum_{j=i+1}^{\infty} \sum_{k=1}^{j-1} B_{j-k,k}^i a_{j-k,k}^l w_{j-k}^l w_k^l -\sum_{j=1}^{\infty} a_{i,j}^l w_i^l w_j^l,  \hspace{.5cm} i \in \mathbb{N}, \label{TNLDCBE}\\
w_i^l(0) &= w_i^{\rm{in},l}, \hspace{.5cm} i \in \mathbb{N}.\label{TNLDCBEIC}
\end{align}

More precisely we have the following result.

\begin{prop}\label{prop0}
There is at least one  mass-conserving solution $w^l$ to \eqref{TNLDCBE}--\eqref{TNLDCBEIC} on $[0,+\infty)$. Moreover, $w^l$ belongs to $L_{loc}^{\infty}([0,+\infty), Y_m)$ for all $m>1$.
\end{prop}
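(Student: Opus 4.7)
The plan is to view the truncated problem \eqref{TNLDCBE}--\eqref{TNLDCBEIC} as a special instance of the discrete coagulation-breakage equation studied in \cite{Laurencot 2001I}, with the coagulation rate identically zero. The decisive feature enabling this reduction is the linear growth bound $a_{i,j}^l \le Al(i+j)$ built into the truncation \eqref{ASSUM3}, which places the kernel $(a_{i,j}^l)$ within the admissible class treated there. In this regime, neither gelation nor mass loss can occur, and a strong-solution theory with propagation of moments is available off the shelf.

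First I would verify the hypotheses required to invoke \cite[Theorem~3.1]{Laurencot 2001I}: the kernel $(a_{i,j}^l)$ is symmetric (which follows from \eqref{ASYMM} and the symmetry of the truncation \eqref{ASSUM3}) and has linear growth; the daughter distribution $(B_{j,k}^i)$ is nonnegative, symmetric in $(j,k)$, and satisfies the local mass conservation identity \eqref{LMC}; and the initial datum $w^{\rm{in},l}$ defined by \eqref{Z5}, being a truncation of $w^{\rm{in}} \in Y_1^+$, belongs to $Y_1^+$. The cited theorem then produces a mass-conserving solution $w^l$ to \eqref{TNLDCBE}--\eqref{TNLDCBEIC} on $[0,+\infty)$ in the sense of Definition~\ref{DEF1}.

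For the moment regularity, I would observe that $w^{\rm{in},l}$ has finite support, hence $w^{\rm{in},l}\in Y_m$ for every $m>1$. Together with the moment-type inequality \eqref{BConda}, this is exactly the input needed by \cite[Proposition~3.7]{Laurencot 2001I}, which guarantees propagation of the $Y_m$-norm on every bounded time interval and thus yields $w^l \in L^\infty_{loc}([0,+\infty), Y_m)$. As a self-contained alternative, one may test \eqref{TNLDCBE} against the weight $i\mapsto i^m$, use \eqref{BConda} to control the gain term, and close the estimate via Gronwall's inequality, exploiting the fact that $(a_{i,j}^l)$ is bounded on the finite support of $w^l(t)$ at each fixed time.

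The main obstacle, to the extent there is one, is simply bookkeeping: checking that the notion of weak solution in \cite{Laurencot 2001I} is compatible with Definition~\ref{DEF1}, and that the hypotheses imposed there on the daughter distribution are implied by \eqref{LMC} and \eqref{BCond}. Since our standing assumptions are in fact stronger than what is needed for both existence and moment propagation in \cite{Laurencot 2001I}, this verification is routine and no genuine analytic difficulty arises in the proof of Proposition~\ref{prop0}.
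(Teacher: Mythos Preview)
Your proposal is correct and follows essentially the same route as the paper: the paper does not give a separate proof of Proposition~\ref{prop0} but simply notes (in the text immediately preceding the statement) that the linear growth bound~\eqref{ASSUM3} places $(a_{i,j}^l)$ within the framework of \cite{Laurencot 2001I} and then invokes \cite[Theorem~3.1 and Proposition~3.7]{Laurencot 2001I}. Your write-up is in fact more detailed than the paper's, spelling out the hypothesis checks and the finite-support observation for $w^{\rm{in},l}$, but the argument is the same.
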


We now report a classical identity for $w^l$. It is usually valid for bounded sequences but the summability properties of $w^l$ stated in Proposition~\ref{prop0} allows us to handle any sequence with algebraic growth.

\begin{lemma} \label{MESTELMMA}
Let $(\mu_i)_{i\ge 1}$ be a non-negative sequence such that $(i^{-m}\mu_i )$ is bounded for some $m\ge 1$. Then there holds
\begin{align}
\frac{d}{dt}\sum_{i=1}^{\infty} \mu_i w_i^l = \frac{1}{2} \sum_{j=1}^{\infty} \sum_{k=1}^{\infty} \Big(\sum_{i=1}^{j+k-1} \mu_i B_{j,k}^i -\mu_j-\mu_k\Big) a_{j,k}^l w_j^l w_k^l.\label{GME}
\end{align}
\end{lemma}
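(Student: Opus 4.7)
The plan is to derive~\eqref{GME} from the integrated form of~\eqref{TNLDCBE} by truncating the outer index range at some finite level $N$, invoking Fubini's theorem to interchange summations, passing to the limit $N\to\infty$, and finally differentiating in time. All interchanges will be justified by a single moment-based dominating bound, which is the main ingredient.

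The key preparatory step is to control the triple sum that will arise after Fubini. Since by hypothesis $\mu_i \le C(1+i^m)$ for some constant $C>0$, combining~\eqref{BConda} with the consequence $\sum_{i} B^i_{j,k} \le j+k$ of~\eqref{LMC} gives
\begin{equation*}
	\sum_{i=1}^{j+k-1} \mu_i B^i_{j,k} \le C_m' \bigl(j^m + k^m + j^{m-1}k + j k^{m-1} + j + k\bigr).
\end{equation*}
Together with the linear growth bound $a^l_{j,k} \le Al(j+k)$ from~\eqref{ASSUM3} and the fact that $w^l \in L^{\infty}_{loc}([0,+\infty), Y_n)$ for every $n\ge 1$ (Proposition~\ref{prop0}), this controls
\begin{equation*}
	\sum_{j,k\ge 1} a^l_{j,k} w^l_j(t) w^l_k(t) \Bigl[\sum_{i=1}^{j+k-1}\mu_i B^i_{j,k} + \mu_j + \mu_k \Bigr]
\end{equation*}
by a finite combination of moment products $M_p(w^l(t)) M_q(w^l(t))$ with $p+q\le m+2$, which is locally bounded in $t$.

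Next, I would fix $N\ge 1$, multiply~\eqref{TNLDCBE} by $\mu_i$, sum for $1\le i\le N$, and integrate over $[0,t]$. In the gain term, the change of summation indices $(j,k) \mapsto (j-k,k)$ rewrites the range $\{j\ge i+1,\, 1\le k\le j-1\}$ as $\{j,k\ge 1,\, j+k-1\ge i\}$, and Fubini (justified by the above bound) yields
\begin{equation*}
	\sum_{i=1}^N \mu_i \cdot \tfrac12 \!\!\sum_{\substack{j,k\ge 1 \\ j+k-1\ge i}}\!\! B^i_{j,k} a^l_{j,k} w^l_j w^l_k = \tfrac12 \sum_{j,k\ge 1} a^l_{j,k} w^l_j w^l_k \sum_{i=1}^{\min\{N, j+k-1\}} \mu_i B^i_{j,k}.
\end{equation*}
Exchanging summations in the loss term similarly gives $\sum_{j,k\ge 1}\mu_j \mathbf{1}_{\{j\le N\}} a^l_{j,k} w^l_j w^l_k$.

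Finally, I would pass to the limit $N\to\infty$ by dominated convergence (again thanks to the bound of step two), symmetrise the loss term via $\sum_{j,k}\mu_j a^l_{j,k} w^l_j w^l_k = \tfrac12\sum_{j,k}(\mu_j+\mu_k) a^l_{j,k} w^l_j w^l_k$ using the symmetry $a^l_{j,k}=a^l_{k,j}$, and differentiate the resulting integral identity in $t$. The latter is legitimate because the integrand depends continuously on $t$: $w^l$ is continuous by construction and the double sums converge uniformly on compact time intervals by the same dominating bound. The main obstacle is precisely this absolute-convergence estimate, tailored so that no tail contributions are lost when $N\to\infty$; once it is secured, the rest of the argument is formal re-indexing.
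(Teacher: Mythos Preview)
Your argument is correct and is exactly the standard way to establish this kind of moment identity. The paper itself does not provide a proof of Lemma~\ref{MESTELMMA}; it simply announces it as a ``classical identity'' and remarks that the summability properties of $w^l$ from Proposition~\ref{prop0} allow one to take sequences $(\mu_i)$ of algebraic growth rather than bounded ones. Your proposal is precisely the computation that fills in this omission: truncate the outer sum at level $N$, re-index the gain term via Fubini, pass to the limit by dominated convergence using the moment bounds on $w^l$, symmetrise the loss term, and differentiate.

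One minor simplification: you do not actually need~\eqref{BConda} for the dominating bound. Since $\mu_i\le Ci^m\le C(j+k)^m$ for $1\le i\le j+k-1$, the cruder estimate
\[
\sum_{i=1}^{j+k-1}\mu_i B^i_{j,k}\le C(j+k)^m\sum_{i=1}^{j+k-1} B^i_{j,k}\le C(j+k)^{m+1},
\]
which uses only~\eqref{LMC}, already suffices, because $w^l\in L^\infty_{\mathrm{loc}}([0,+\infty),Y_n)$ for every $n>1$. This makes the lemma independent of the structural assumption~\eqref{BConda}, in line with the paper's presentation of it as a general identity.
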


\subsection{Moment Estimates}

We provide several  estimates for superlinear moments of solutions to  \eqref{TNLDCBE}--\eqref{TNLDCBEIC}. These bounds will be crucial for the proofs of our main results.

Before moving further, let us prove the following lemma which will be helpful on getting lower bounds.

\begin{lemma} \label{ZEROBNDLEM}
Let $w^l$ be the solution to \eqref{TNLDCBE}--\eqref{TNLDCBEIC} given by Proposition~\ref{prop0}. There are $l_0 \ge 1$  depending only on $w^{\rm{in}}$, and $\delta_0>0$ depending only on $\rho_0$, $\rho_1$ and $\alpha$ such that, for $t\ge 0$ and $l \ge l_0$,

\begin{align}
	M_1(w^l(t)) & = M_1(w^{\rm{in},l}) \le\rho_1 , \label{LEMEST1}\\
	M_0(w^l(t)) & \ge M_0(w^{\rm{in},l}) \ge \delta_0, \label{LEMEST2}\\
	M_{\alpha}(w^l(t)) & \ge \delta_0>0. \label{LEMEST3}
\end{align}
\end{lemma}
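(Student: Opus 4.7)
The plan is to establish the three bounds in order, relying on Proposition~\ref{prop0} for the existence and properties of $w^l$ and on the identity \eqref{GME} of Lemma~\ref{MESTELMMA}. The first bound \eqref{LEMEST1} is essentially bookkeeping: Proposition~\ref{prop0} states that $w^l$ is mass-conserving, so $M_1(w^l(t))=M_1(w^{\rm{in},l})$, and the truncation \eqref{Z5} together with \eqref{M10BND} yields $M_1(w^{\rm{in},l})=\sum_{i=1}^l iw_i^{\rm{in}}\le M_1(w^{\rm{in}})\le\rho_1$.

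For \eqref{LEMEST2}, I would apply Lemma~\ref{MESTELMMA} with the constant sequence $\mu_i\equiv 1$. The bracket in \eqref{GME} then becomes $\sum_{i=1}^{j+k-1}B_{j,k}^i-2$, which is non-negative by \eqref{BCondb}. Hence $t\mapsto M_0(w^l(t))$ is non-decreasing, giving $M_0(w^l(t))\ge M_0(w^{\rm{in},l})$. To obtain a lower bound on the right-hand side depending only on $\rho_0,\rho_1$, I would use a Markov-type tail estimate: for any $N\ge 1$,
\begin{equation*}
\sum_{i>N}w_i^{\rm{in}}\le\frac{1}{N+1}\sum_{i>N}iw_i^{\rm{in}}\le\frac{\rho_1}{N+1},
\end{equation*}
so that $\sum_{i\le N}w_i^{\rm{in}}\ge\rho_0-\rho_1/(N+1)$. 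Choosing $l_0:=\lceil 2\rho_1/\rho_0\rceil$ ensures that, for every $l\ge l_0$, $M_0(w^{\rm{in},l})\ge\rho_0/2$, which is the desired lower bound.

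For \eqref{LEMEST3} I would split on the sign of $\alpha$. When $\alpha\ge 0$ we have $i^\alpha\ge 1$ for each $i\ge 1$, so \eqref{LEMEST2} gives directly $M_\alpha(w^l(t))\ge M_0(w^l(t))\ge\rho_0/2$. The more delicate case is $\alpha<0$, where a H\"older interpolation is needed. Setting $\theta=1/(1-\alpha)\in(0,1)$, the decomposition $w_i=(i^\alpha w_i)^\theta(iw_i)^{1-\theta}$ combined with H\"older's inequality yields
\begin{equation*}
M_0(w^l(t))\le M_\alpha(w^l(t))^\theta M_1(w^l(t))^{1-\theta},
\end{equation*}
and inverting this bound together with $\alpha<0$, $M_1(w^l(t))\le\rho_1$ and $M_0(w^l(t))\ge\rho_0/2$ gives $M_\alpha(w^l(t))\ge(\rho_0/2)^{1-\alpha}\rho_1^\alpha$. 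A single admissible choice is then $\delta_0:=\min\{\rho_0/2,(\rho_0/2)^{1-\alpha}\rho_1^\alpha\}$, which depends only on $\rho_0,\rho_1,\alpha$ as required.

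The main (minor) obstacle is the $\alpha<0$ case of \eqref{LEMEST3}, since then $M_\alpha$ is not trivially bounded from below by $M_0$; the H\"older interpolation bridges $M_\alpha$, $M_0$ and $M_1$, and its effectiveness relies crucially on the a priori upper bound $M_1(w^l)\le\rho_1$ established in \eqref{LEMEST1}. Everything else reduces to the tail bound for $M_0(w^{\rm{in},l})$ and the non-negativity of the production term coming from \eqref{BCondb}.
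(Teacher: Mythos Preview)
Your proposal is correct and follows essentially the same approach as the paper's own proof: mass conservation for \eqref{LEMEST1}, the identity \eqref{GME} with $\mu_i=1$ together with \eqref{BCondb} for the monotonicity of $M_0$, and the same case split on the sign of $\alpha$ with the identical H\"older interpolation and the same value of $\delta_0$ for \eqref{LEMEST3}. The only (cosmetic) difference is in the choice of $l_0$: the paper simply picks $l_0$ so that the tail $\sum_{i>l_0}w_i^{\rm{in}}\le M_0(w^{\rm{in}})/2$, whereas you make this quantitative via the Markov bound $\sum_{i>N}w_i^{\rm{in}}\le\rho_1/(N+1)$, yielding an explicit $l_0$ in terms of $\rho_0,\rho_1$.
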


\begin{proof}

First, the conservation of matter and the upper bound \eqref{LEMEST1} readily follow from \eqref{M10BND}, \eqref{Z5}, \eqref{ASSUM3} and Proposition~\ref{prop0}. Next, we pick $l_0\ge 1$ large enough such that
\begin{equation}
\sum_{i=l_0+1}^{\infty} w_i^{\rm{in}} \le \frac{M_0(w^{\rm{in}})}{2}. \label{TAILBND}
\end{equation} 
From \eqref{BCondb} and Lemma~\ref{MBLemma} (with $\mu_i=1$), we infer that 
\begin{equation*}
\frac{d}{dt} M_0(w^l) =\frac{1}{2}\sum_{j=1}^{\infty} \sum_{k=1}^{\infty} \Big( \sum_{i=1}^{j+k-1} B_{j,k}^i -2 \Big)a_{j,k}^l w_j^l w_k^l \ge 0, 
\end{equation*}
which further gives, together with \eqref{M10BND} and \eqref{TAILBND}, 
\begin{equation}
M_0(w^l(t))\ge M_0(w^{\rm{in},l})\ge \frac{M_0(w^{\rm{in}})}{2}\ge \frac{\rho_0}{2}, \qquad  \text{for}~~~~l\ge l_0. \label{Z7}
\end{equation}
 Now, either $\alpha\in [0,1)$ and~\eqref{Z7} gives
\begin{equation*}
	M_{\alpha}(w^l(t)) \ge M_0(w^l(t)) \ge \frac{\rho_0}{2};
\end{equation*}
or $\alpha<0$ and we deduce from~\eqref{LEMEST1}, \eqref{Z7} and H\"older's inequality that
\begin{align*}
	\frac{\rho_0}{2}\le M_0(w^l(t)) \le M_{\alpha}(w^l(t))^{\frac{1}{1-\alpha}} M_{1}(w^l(t))^{\frac{-\alpha}{1-\alpha}}\le M_{\alpha}(w^l(t))^{\frac{1}{1-\alpha}} \rho_1^{\frac{|\alpha|}{1-\alpha}},
\end{align*}
which implies that
\begin{equation*}
	\Big(\frac{\rho_0}{2}\Big)^{1-\alpha} \rho_1^{\alpha} \le M_{\alpha}(w^l(t)).
\end{equation*}
Setting $\delta_0 = \min\Big\{\frac{\rho_0}{2}, \Big(\frac{\rho_0}{2}\Big)^{1-\alpha} \rho_1^{\alpha}\Big\}$ completes the proof.
\end{proof}

Based on these lower bounds, we exploit the inequality~\eqref{BCond} to derive uniform estimates on the superlinear  moments of $w^l$.

\begin{lemma} \label{MBLemma}
Let $l\ge l_0$ and $w^l$ be the solution to \eqref{TNLDCBE}--\eqref{TNLDCBEIC} given by Proposition~\ref{prop0}. Given $m > 1$, there is a positive constant $F_m$ depending only on $(A, \alpha, \beta)$ in \eqref{AGROWTHa}, $(\epsilon_m,\kappa_m)$ in \eqref{BConda},  $(\rho_0,\rho_1)$ in \eqref{M10BND} and  $m$ such that
	 \begin{equation}
	 M_m(w^l(t)) \le F_m \Big(1+\frac{1}{t}\Big)^{\frac{m-1}{\beta}}, \qquad t>0. \label{FOM}
	 \end{equation}
	 In addition, there is a positive constant $\mu_m$ depending only on $A$, $\alpha$, $\beta$, $\epsilon_m$, $\kappa_m$, $\rho_0$, $\rho_1$ and  $m$ such that  
	 \begin{equation}
		M_m(w^l(t)) \le \max\Bigg\{ M_m(w^{\rm{in},l}), \mu_m \Bigg\}, \qquad t>0. \label{FOM1}
	\end{equation}
	\end{lemma}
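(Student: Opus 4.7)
The plan is to derive a coercive differential inequality for $M_m(w^l)$ and then convert it, via a moment interpolation, into a Bernoulli-type ODE bound from which both~\eqref{FOM} and~\eqref{FOM1} follow by a standard comparison argument.

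First, I would apply Lemma~\ref{MESTELMMA} with the admissible weight $\mu_i = i^m$ (admissibility being ensured by Proposition~\ref{prop0}) and use \eqref{BConda} to split the resulting identity into a destruction part and a gain part:
\begin{equation*}
\frac{d}{dt} M_m(w^l) \le \frac{1}{2}\sum_{j,k\ge 1} \Big[-\epsilon_m\bigl(j^m+k^m\bigr) + \kappa_m\bigl(j^{m-1}k+jk^{m-1}\bigr)\Big]\, a_{j,k}^l\, w_j^l w_k^l.
\end{equation*}
For the destruction part, I would exploit the pointwise lower bound $a_{j,k}^l \ge A(j^\beta + k^\beta)$, valid when $\alpha\ge 0$ because $\min\{i,l\}^\alpha \ge 1$ for $i\ge 1$, while when $\alpha<0$ one has $a_{j,k}^l = A(j^\alpha k^\beta + j^\beta k^\alpha)$ exactly. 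Expanding the bilinear sum and inserting the lower bounds supplied by Lemma~\ref{ZEROBNDLEM} (namely $M_0(w^l)\ge \delta_0$, $M_\alpha(w^l)\ge \delta_0$, $M_\beta(w^l)\ge \delta_0$), the destruction part is bounded above by $-A\epsilon_m\delta_0\, M_{m+\beta}(w^l)$; it is the exponent $\beta$ that produces the highest-order moment because $\beta\ge\alpha$.

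For the gain part, bounding $a_{j,k}^l \le a_{j,k}$ and expanding yields the factorised expression
\begin{equation*}
A\kappa_m\bigl[M_{1+\alpha}(w^l)\,M_{m-1+\beta}(w^l) + M_{1+\beta}(w^l)\,M_{m-1+\alpha}(w^l)\bigr],
\end{equation*}
both products having total moment order $m+\alpha+\beta$. I would then interpolate each factor by H\"older's inequality between a bounded moment (either $M_0$ or $M_1$, hence controlled by $\rho_1$) and $M_{m+\beta}$. A short case analysis based on the signs of $\alpha$, $m+\beta-2$, and $m-1+\alpha$ shows that in every product the total exponent of $M_{m+\beta}$ equals $1 - (1-\alpha)/(m+\beta-1)$ or is even smaller; this exponent is strictly below one because $\alpha<1$, while in the sub-critical sub-cases (e.g., $m+\beta<2$) one also needs $m+\beta-1>\alpha$, which itself is a consequence of $m>1$ and $\beta\ge\alpha$. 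Young's inequality then absorbs the gain part into $(A\epsilon_m\delta_0/2)\, M_{m+\beta}(w^l) + C_m$, yielding
\begin{equation*}
\frac{d}{dt} M_m(w^l) \le C_m - \frac{A\epsilon_m\delta_0}{2}\, M_{m+\beta}(w^l),
\end{equation*}
with $C_m$ depending only on the parameters listed in the statement.

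Finally, the classical interpolation $M_m \le M_1^{\beta/(m+\beta-1)} M_{m+\beta}^{(m-1)/(m+\beta-1)}$ combined with $M_1(w^l)\le \rho_1$ provides the reverse bound $M_{m+\beta}(w^l) \ge \rho_1^{-\beta/(m-1)}\, M_m(w^l)^{1+\beta/(m-1)}$, so that
\begin{equation*}
\frac{d}{dt} M_m(w^l) \le C_m - K_m\, M_m(w^l)^{1+\sigma}, \qquad \sigma := \frac{\beta}{m-1}>0.
\end{equation*}
A standard comparison for this autonomous scalar ODE then delivers both estimates: setting $\mu_m := (2 C_m/K_m)^{1/(1+\sigma)}$, the trajectory is confined to $[0,\max\{M_m(w^{\rm in,l}),\mu_m\}]$ because the coercive term forbids crossings of $\mu_m$ from below, which gives~\eqref{FOM1}; and above $\mu_m$ the sharper inequality $\dot y \le -K_m y^{1+\sigma}/2$ holds, whose integration yields the smoothing estimate $y(t)\le (K_m\sigma t/2)^{-1/\sigma}$, equivalent to the stated form~\eqref{FOM}. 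The main technical hurdle is the interpolation/absorption step: one has to check uniformly, across all sub-ranges of the exponents (in particular when $\alpha<0$ or $m+\beta<2$), that the power of $M_{m+\beta}$ in every gain product stays strictly below one, which is exactly where the condition $\alpha<1$ is used sharply.
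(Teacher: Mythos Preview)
Your proposal is correct and follows essentially the same route as the paper: apply the moment identity with $\mu_i=i^m$, use~\eqref{BConda} to split into a coercive loss term (bounded below by $A\epsilon_m\delta_0 M_{m+\beta}$ via Lemma~\ref{ZEROBNDLEM}) and a bilinear gain term, interpolate each moment factor in the gain by H\"older between $M_1$ and $M_{m+\beta}$ so that the total exponent of $M_{m+\beta}$ is strictly below~$1$ (this is precisely where $\alpha<1$ enters), absorb by Young, and close via $M_m\le\rho_1^{\beta/(m+\beta-1)}M_{m+\beta}^{(m-1)/(m+\beta-1)}$ and a scalar comparison. The only cosmetic differences are that the paper replaces $\alpha$ by $\alpha_+$ in the gain estimate (which spares the sign-of-$\alpha$ case analysis you outline and makes the interpolation exponents explicit), and that the paper exhibits the explicit supersolution $X(t)=(X_1+X_2/t)^{(m-1)/\beta}$ to~\eqref{EST7} rather than integrating $\dot y\le -K_m y^{1+\sigma}/2$ as you do.
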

	
\begin{proof}
Fix $m>1$. We take $\mu_i=i^m$ in \eqref{GME} to get 
 \begin{align*}
 \frac{d}{dt} \sum_{i=1}^{\infty} i^m w_i^l= \sum_{j=1}^{\infty} \sum_{k=1}^{\infty} \Big[\sum_{i=1}^{j+k-1} i^m B_{j,k}^i - j^m - k^m\Big] a_{j,k}^l w_j^l w_k^l.
 \end{align*}
 Now using  \eqref{BConda}, we arrive at 
 \begin{align}
 \frac{d}{dt}M_{m}(w^l) \le& \frac{1}{2} \sum_{j=1}^{\infty} \sum_{k=1}^{\infty} \Big[\kappa_m (jk^{m-1} + j^{m-1} k) -\epsilon_m(j^m + k^m) \Big] a_{j,k}^l w_j^l w_k^l \nonumber\\
 & \le \kappa_m \sum_{j=1}^{\infty} \sum_{k=1}^{\infty}  jk^{m-1} a_{j,k}^l w_j^l w_k^l -\epsilon_m \sum_{j=1}^{\infty} \sum_{k=1}^{\infty}  j^m a_{j,k}^l w_j^l w_k^l. \label{EST01}
 \end{align}
 Let us first look at the dissipative term  in \eqref{EST01}, 
 \begin{align}
 \sum_{j=1}^{\infty}\sum_{k=1}^{\infty} j^m a_{j,k}^l w_j^l w_k^l &=\sum_{j=1}^{\infty} \sum_{k=1}^{\infty} \big[j^m \min\{j,l\}^{\alpha_+} j^{\alpha-\alpha_+} k^{\beta} + j^{m+\beta} \min\{k,l\}^{\alpha_+} k^{\alpha-\alpha_+} \big]w_j^l w_k^l \nonumber\\
 &\ge \Big( \sum_{j=1}^{\infty}j^{m+\beta} w_j^l\Big)\Big(\sum_{k=1}^{\infty} \min\{k,l\}^{\alpha_+} k^{\alpha-\alpha_+} w_k^l\Big) \nonumber\\
 & \ge \Big( \sum_{j=1}^{\infty}j^{m+\beta} w_j^l\Big) \Big(\sum_{k=1}^{\infty} k^{\alpha} w_k^l\Big). \label{EST02}
 \end{align}
 Since
 \begin{equation*}
 \sum_{k=1}^{\infty} k^{\alpha} w_k^l =M_\alpha(w^l) \ge \delta_0
 \end{equation*}
 by  Lemma~\ref{ZEROBNDLEM}, we deduce that 
 \begin{equation}
\sum_{j=1}^{\infty} \sum_{k=1}^{\infty} j^m a_{j,k}^l w_j^l w_k^l \ge \delta_0 M_{m+\beta}(w^l). \label{EST02a}
 \end{equation}

Next, considering the first term on the right-hand side of \eqref{EST01}, it follows from \eqref{ASSUM3} that
 \begin{align}
\sum_{j=1}^{\infty} \sum_{k=1}^{\infty} j k^{m-1} a_{j,k}^l w_j^l w_k^l=& \sum_{j=1}^{\infty}\sum_{k=1}^{\infty} j^{1+\alpha-\alpha_+} \min\{j,l\}^{\alpha_+} k^{m+\beta-1} \\
& \qquad + \sum_{j=1}^{\infty}\sum_{k=1}^{\infty} j^{1+\beta}k^{m-1+\alpha-\alpha_+} \min\{k,l\}^{\alpha_+}\big]w_j^l w_k^l \nonumber\\
 &\le M_{1+\alpha_+}(w^l) M_{m+\beta-1}(w^l)+ M_{1+\beta}(w^l) M_{m-1+\alpha_+}(w^l). \label{EST03}
 \end{align}
 In order to continue, we estimate the involved moments with the help of~\eqref{AGROWTHa} and H\"older's inequality and find  
 \begin{align*}
 M_{1+\alpha_+}(w^l) \le&(M_1(w^l))^{\frac{m+\beta-1-\alpha_+}{m+\beta-1}}(M_{m+\beta}(w^l))^{\frac{\alpha_+}{m+\beta-1}}, \\
 M_{m+\beta-1}(w^l) \le& (M_{\beta}(w^l))^{\frac{1}{m}} (M_{m+\beta}(w^l))^{\frac{m-1}{m}}\le (M_1(w^l))^{\frac{1}{m}}(M_{m+\beta}(w^l))^{\frac{m-1}{m}}, \\
 M_{1+\beta}(w^l) \le&(M_1(w^l))^{\frac{m-1}{m+\beta-1}}(M_{m+\beta}(w^l))^{\frac{\beta}{m+\beta-1}}, \\
 M_{m+\alpha_+-1}(w^l) \le& (M_{\alpha_+}(w^l))^{\frac{1+\beta-\alpha_+}{m+\beta-\alpha_+}} (M_{m+\beta}(w^l))^{\frac{m-1}{m+\beta-\alpha_+}}\nonumber\\
 &\le (M_1(w^l))^{\frac{1+\beta-\alpha_+}{m+\beta-\alpha_+}})(M_{m+\beta}(w^l))^{\frac{m-1}{m+\beta-\alpha_+}},
 \end{align*}
 so that, by \eqref{LEMEST1}, we get
 \begin{align}
 M_{1+\alpha_+}(w^l) M_{m+\beta-1}(w^l)  \le& C(m) M_{m+\beta}(w^l)^{\theta},\label{EST1}\\
 M_{1+\beta}(w^l) M_{m+\alpha_+-1}(w^l)  \le& C(m) M_{m+\beta}(w^l)^{\omega},\label{EST2}
 \end{align}
 where 
 \begin{align*}
 \theta &= \frac{(m-1)(m-1+\beta)+m\alpha_+}{m(m+\beta-1)}>0,\\
 \omega &=\frac{(m+\beta-1)^2 +\beta(1-\alpha_+)}{m+\beta-1)(m+\beta-\alpha_+)}>0.
 \end{align*}
 
 We note that, since $m>1$, $\beta\ge \alpha_+$ and $\alpha_+<1$ by \eqref{AGROWTHb},
 \begin{align*}
 1-\theta =& \frac{m+\beta-1-m\alpha_+}{m(m+\beta-1)}\ge \frac{m(1-\alpha_+)+ \alpha_+-1}{m(m+\beta-1)}= \frac{(m-1)(1-\alpha_+)}{m(m-\beta+1)}>0,\\
 1-\omega=& \frac{(m-1)(1-\alpha_+)}{(m+\beta-1)(m+\beta-\alpha_+)}>0.
 \end{align*}

 On collecting the estimates from~\eqref{EST02a}, \eqref{EST1}, \eqref{EST2} and inserting into~\eqref{EST01}, we obtain
 \begin{equation*}
 \frac{d}{dt} M_m(w^l)\le  C(m)[(M_{m+\beta}(w^l))^{\theta} +(M_{m+\beta}(w^l))^{\omega}]-\epsilon_m \delta_0 M_{m+\beta}(w^l).
 \end{equation*}
 	Since $(\theta,\omega)\in (0,1)^2$, we deduce from Young's inequality that
 	\begin{equation}
 	\frac{d}{dt} M_m(w^l)\le C(m) -\frac{\epsilon_m \delta_0}{2} M_{m+\beta}(w^l). \label{EST3}
 	\end{equation}
 Since $m \in [1,m+\beta]$, we use  H\"{o}lder's inequality and~\eqref{LEMEST1} to show that
 \begin{equation*}
 M_m(w^l) \le M_1(w^l)^{\frac{\beta}{m+\beta-1}} (M_{m+\beta}(w^l))^{\frac{m-1}{m+\beta-1}} \le \rho_1^{\frac{\beta}{m+\beta-1}} (M_{m+\beta}(w^l))^{\frac{m-1}{m+\beta-1}}.
\end{equation*}  
Hence ,
\begin{equation}
	\rho_1^{-\frac{\beta}{m-1}} (M_m(w^l))^{\frac{m+\beta-1}{m-1}} \le  M_{m+\beta}(w^l). \label{EST8}
\end{equation}
 Plugging~\eqref{EST8} in~\eqref{EST3}, we get
\begin{align}
  \frac{d}{dt}M_m(w^l) +C_{1}(m) (M_{m}(w^l))^{\frac{m+\beta-1}{m-1}}\le C_{2}(m), \qquad t\ge0. \label{EST7}
 \end{align}
 
 Introducing 
 \begin{equation*}
 	X(t) = \Big(X_1 + \frac{X_2}{t}\Big)^{\frac{m-1}{\beta}}, \qquad t>0,
 \end{equation*}
 with 
 \begin{equation*} 
 	X_2 = \frac{m}{\beta C_{1}(m)}, \qquad X_1 = \Bigg( \frac{mC_{2}(m)}{C_{1}(m)} \Bigg)^{\frac{\beta}{m+\beta-1}},
\end{equation*}
an easy computation shows that $X$ is a supersolution to \eqref{EST7} such that $X(t)\to +\infty $ as $t\to 0$. The comparison principle then implies that $M_m(w^l(t))\le X(t)$ for $t >0$, which proves \eqref{FOM} for $m > 1$ with $F_m= \max\big\{X_1, X_2\big\}^{\frac{m-1}{\beta}}.$

 We also observe that~\eqref{EST7} and the comparision principle imply that 
\begin{equation*}
M_m(w^l(t)) \le \max\Bigg\{ M_m(w^{\rm{in},l}), \Bigg(\frac{C_{2}(m)}{C_{1}(m)}\Bigg)^{\frac{m-1}{m-1+\beta}}\Bigg\},
\end{equation*}
for $t\ge 0$, thereby completing the proof of Lemma~\ref{MBLemma}, recalling that $M_m(w^{\rm{in},l})\le M_m(w^{\rm{in}})$ by~\eqref{Z5}.
\end{proof}

Since we aim at showing the existence of a weak solution to \eqref{NLDCBE}--\eqref{NLDCBEIC} for arbitrary $w^{\rm{in}}\in Y_1^+$, the previous study of the time evolution of superlinear moments is not sufficient due to the singularity in \eqref{FOM} as $t\to 0$. To overcome this difficulty, we proceed as in \cite{BLL 2019, Laurencot 2001I} with the help of de la Vall\'ee Poussin theorem. This classical approach requires to introduce some notation. We denote by $\mathcal{G}_1$ the set of non-negative and convex functions $G \in C^1([0,+\infty))\cap W_{\text{loc}}^{2,\infty}(0,+\infty)$ such that $G(0)=0$, $G'(0)\geq 0$, $G'$ is a concave function and, for any $p\in(1,2]$,
\begin{align}
\sigma_p(G) :=\sup_{\zeta\ge 0}\frac{G(\zeta)}{\zeta^p}<+\infty.
\end{align}
 We also denote by $\mathcal{G}_{1, \infty}$  the set of functions $G \in \mathcal{G}_1$ satisfying, in addition,
\begin{align}
\lim_{\zeta \to +\infty} G'(\zeta) = \lim_{\zeta\to +\infty} \frac{G(\zeta)}{\zeta} = +\infty.
\end{align}  
	
An interesting property of functions in $\mathcal{G}_1$ is the following inequality \cite[Proposition~7.1.9]{BLL 2019}.

\begin{lemma}
For $G \in \mathcal{G}_1$ and $i,j\ge 1$ there holds 
\begin{equation}
(i+j)\big(G(i+j)-G(i) -G(j)\big) \le 2 \big(i G(j) +j G(i)\big). \label{GInequality}
\end{equation}
\end{lemma}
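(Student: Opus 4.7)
The plan is to exploit the two structural properties that characterize $G\in\mathcal{G}_1$, namely the convexity of $G$ together with $G(0)=0$, and the concavity of $G'$ together with $G'(0)\ge 0$. I would first establish that $G'$ is subadditive on $[0,\infty)$: since $G'$ is concave with $G'(0)\ge 0$, writing each of $a$ and $b$ as a convex combination of $0$ and $a+b$ (namely $a=\frac{a}{a+b}(a+b)+\frac{b}{a+b}\cdot 0$, and symmetrically for $b$), applying the concavity inequality to each and summing yields $G'(a+b)\le G'(a)+G'(b)$ for $a,b\ge 0$.

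Using this subadditivity, the identity
\[
G(i+j) - G(i) - G(j) = \int_0^i \bigl[G'(s+j) - G'(s)\bigr]\, ds
\]
immediately gives $G(i+j) - G(i) - G(j) \le i G'(j)$, and by symmetry also $\le j G'(i)$. Next, since $G'$ is concave on $[0,\infty)$ with $G'(0)\ge 0$, the map $s\mapsto G'(s)/s$ is non-increasing on $(0,\infty)$, because concavity gives $G'(s)\ge (s/x)G'(x) + (1-s/x)G'(0)\ge (s/x)G'(x)$ for $0<s\le x$. Integrating this pointwise inequality on $(0,x)$ produces $G(x)\ge xG'(x)/2$, hence $G'(x)\le 2G(x)/x$ for $x>0$. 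The same inequality rewrites as $xG'(x)-2G(x)\le 0$, which is the sign condition for the derivative of $G(x)/x^2$; one concludes that $x\mapsto G(x)/x^2$ is non-increasing on $(0,\infty)$.

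To finish, I would assume without loss of generality that $i\le j$. The monotonicity of $G(x)/x^2$ then yields $i^2 G(j)\le j^2 G(i)$, and combining the previous estimates gives
\[
(i+j)\bigl[G(i+j)-G(i)-G(j)\bigr] \le (i+j)\,iG'(j) \le \frac{2(i+j)iG(j)}{j} = 2iG(j) + \frac{2i^2G(j)}{j} \le 2iG(j) + 2jG(i),
\]
which is exactly~\eqref{GInequality}. The main subtlety, and the reason the reduction $i\le j$ is essential, is that a symmetric averaging such as $G(i+j)-G(i)-G(j)\le \tfrac{1}{2}(iG'(j)+jG'(i))$ combined with $G'(x)\le 2G(x)/x$ produces only the bound $(i+j)\bigl(iG(j)/j + jG(i)/i\bigr)$, which strictly exceeds $2(iG(j)+jG(i))$ whenever $i\ne j$. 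Exploiting the pointwise monotonicity of $G(x)/x^2$ on the asymmetric bound $iG'(j)$ is precisely what lets the surplus term $2i^2G(j)/j$ be absorbed into $2jG(i)$.
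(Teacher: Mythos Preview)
Your proof is correct. The paper does not actually prove this lemma; it simply quotes it from \cite[Proposition~7.1.9]{BLL 2019}. Your argument is self-contained and follows what is essentially the standard route used in that reference: subadditivity of $G'$ (from concavity and $G'(0)\ge 0$) yields $G(i+j)-G(i)-G(j)\le iG'(j)$, the pointwise bound $G'(x)\le 2G(x)/x$ (again from concavity of $G'$ and $G'(0)\ge 0$) converts this to a bound in terms of $G(j)$, and the monotonicity of $x\mapsto G(x)/x^2$ absorbs the leftover term $2i^2G(j)/j$ into $2jG(i)$ when $i\le j$. Your closing remark explaining why the asymmetric bound $iG'(j)$ must be used rather than the symmetric average is a nice clarification that is not usually spelled out.
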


We  now show that the generalized moments built upon $G\in \mathcal{G}_1$ propagates  with time.

\begin{lemma} \label{prop1}
Consider $T \in (0,+\infty)$ and $G \in \mathcal{G}_1$. Then there is a positive constant $\Lambda(T)$ depending only on $A$, $\alpha$, $\beta$, $F_\lambda$, $G$ and $T$ such that, for each $l \geq l_0$, there holds 
\begin{equation}
\sum_{i=1}^{\infty} G(i) w_i^l(t) \leq \Lambda(T) \Big(1+\sum_{i=1}^{\infty} G(i) w_i^{\rm{in},l}\Big), \qquad t \in[0,T], \label{PROPEQN1}
\end{equation}

\begin{align}
0 & \leq \int_0^T\sum_{i=1}^{\infty} \sum_{j=1}^{\infty} \sum_{s=1}^{i+j-1}\Big(\frac{G(i+j)}{i+j}-\frac{G(s)}{s}\Big)s B_{i,j}^s  a_{i,j}^l w_i^l(t) w_j^l(t) dt \nonumber \\
& \hspace{7cm}\leq \Lambda(T) \Big(1+\sum_{i=1}^l G(i) w_i^{\rm{in}}\Big),\label{PROPEQN2}
\end{align}
where $F_\lambda$ is defined in Lemma~\ref{MBLemma} and $\lambda\in (\beta,1+\beta)$ is defined below in the proof and only depends on $\alpha$ and $\beta$.
\end{lemma}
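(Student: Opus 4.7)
The plan is to apply Lemma~\ref{MESTELMMA} with the choice $\mu_i = G(i)$, which is legitimate since $G \in \mathcal{G}_1$ yields $G(i) \le \sigma_2(G) i^2$. Using the mass-conservation property $\sum_{i=1}^{j+k-1} i B_{j,k}^i = j+k$ from~\eqref{LMC}, I would first rewrite
\begin{equation*}
\sum_{i=1}^{j+k-1} G(i) B_{j,k}^i - G(j) - G(k) = \bigl[G(j+k) - G(j) - G(k)\bigr] - \sum_{i=1}^{j+k-1} \left( \frac{G(j+k)}{j+k} - \frac{G(i)}{i} \right) i B_{j,k}^i.
\end{equation*}
Since $\zeta \mapsto G(\zeta)/\zeta$ is non-decreasing (by convexity of $G$ with $G(0)=0$), the last sum is non-negative. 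Writing $Y^l(t) := \sum_i G(i) w_i^l(t)$, the moment identity then takes the form $\tfrac{d}{dt} Y^l = P^l(t) - D^l(t)$ with $P^l, D^l \ge 0$, and $2\int_0^T D^l(t)\, dt$ coincides with the integral appearing in~\eqref{PROPEQN2}, thereby delivering its non-negativity assertion for free.

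Next I would estimate $P^l$. Combining~\eqref{GInequality} with the symmetry $j \leftrightarrow k$ gives
\begin{equation*}
P^l(t) \le 2A \sum_{j,k \ge 1} \frac{j^\alpha k^{1+\beta} + j^\beta k^{1+\alpha}}{j+k}\, G(j)\, w_j^l(t) w_k^l(t).
\end{equation*}
The key tool is the Young-type bound $(j+k)^{-1} \le j^{-\eta} k^{\eta-1}$ for any $\eta \in [0,1]$, which follows from $j+k \ge j^\eta k^{1-\eta}$. Selecting $\eta = \alpha_+$ in the first fraction and $\eta = \beta$ in the second, and exploiting $\alpha - \alpha_+ \le 0$ together with $\beta \le 1$, one gets $\tfrac{j^\alpha k^{1+\beta}}{j+k} \le k^{\alpha_+ + \beta}$ and $\tfrac{j^\beta k^{1+\alpha}}{j+k} \le k^{\alpha+\beta}$, whence
\begin{equation*}
P^l(t) \le 2A \bigl(M_{\alpha_+ + \beta}(w^l(t)) + M_{\alpha+\beta}(w^l(t))\bigr) Y^l(t).
\end{equation*}
I would then set $\lambda := \alpha + \beta$ when $\alpha > 0$ (so $\lambda \in (\beta, 1+\beta)$ by~\eqref{AGROWTHb}) and pick any $\lambda \in (\beta, 1+\beta)$ otherwise; in the case $\alpha \le 0$ both moments are trivially bounded by $M_1(w^l) \le \rho_1$, while in the case $\alpha > 0$ they coincide with $M_\lambda(w^l)$. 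Either way, $P^l(t) \le 4A(\rho_1 + M_\lambda(w^l(t))) Y^l(t)$.

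To conclude, Lemma~\ref{MBLemma} yields $M_\lambda(w^l(t)) \le F_\lambda(1+1/t)^{(\lambda-1)/\beta}$; since $\lambda < 1+\beta$ ensures $(\lambda-1)/\beta < 1$, the integral $\int_0^T M_\lambda(w^l(s))\, ds$ is finite and controlled by a constant $K(T)$ depending only on $T$, $F_\lambda$, $\lambda$, $\beta$. Gronwall's inequality applied to $\tfrac{d}{dt} Y^l \le 4A(\rho_1 + M_\lambda(w^l)) Y^l$ then produces~\eqref{PROPEQN1} with $\Lambda(T) = \exp\bigl(4A(\rho_1 T + K(T))\bigr)$. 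For~\eqref{PROPEQN2}, integrating the identity $\tfrac{d}{dt} Y^l = P^l - D^l$ over $[0,T]$ yields $\int_0^T D^l\, dt \le Y^l(0) + \int_0^T P^l\, dt$, and the combination of the pointwise bound on $P^l$ with the Gronwall bound on $Y^l$ produces the required estimate. The hardest part will be pinning down $\lambda$ so that two competing constraints are compatible: $\lambda \ge \alpha_+ + \beta$ is needed so that $M_{\alpha_+ + \beta}$ and $M_{\alpha+\beta}$ are dominated by $M_\lambda$ (after merging with the trivial bound $M_\nu \le \rho_1$ valid for $\nu \le 1$), while $\lambda < 1+\beta$ is forced by the time-integrability of the moment estimate from Lemma~\ref{MBLemma}; the assumption $\alpha < 1$ in~\eqref{AGROWTHb} is precisely what makes this compatibility possible.
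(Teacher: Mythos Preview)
Your argument is correct and takes a genuinely different route from the paper's. The paper does not keep $G(j)$ as a factor: after reaching the same production term $\sum_{j,k}\frac{jG(k)+kG(j)}{j+k}a_{j,k}^l w_j^l w_k^l$, it replaces $G$ by a power via $G(\zeta)\le\sigma_p(G)\zeta^p$ for a carefully chosen $p\in(1,2]$, and then uses the rougher bound $(j+k)^{-\theta_1}(j+k)^{-(1-\theta_1)}$ on each fraction to obtain products of algebraic moments of $w^l$. This forces a somewhat intricate choice of $(p,\theta_1,\theta_2)$ with a case distinction according to whether $\beta<1$ or $\beta=1$, and yields a differential inequality of the form $\frac{d}{dt}M_G^l + D^l \le C\,\sigma_p(G)\,\rho_1\,M_\lambda(w^l)$ whose right-hand side is \emph{independent of} $M_G^l$; the conclusion then follows by direct integration rather than Gronwall. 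By contrast, your bound $(j+k)^{-1}\le j^{-\eta}k^{\eta-1}$ with $\eta\in\{\alpha_+,\beta\}$ absorbs the $j$-factors entirely into $G(j)$, producing the multiplicative inequality $\frac{d}{dt}Y^l \le 4A(\rho_1+M_\lambda)Y^l$ and hence requiring Gronwall. Your route is shorter, avoids the case split on $\beta$, and---notably---produces a constant $\Lambda(T)$ that does not depend on $G$ at all (the paper's constant carries the factor $\sigma_p(G)$).

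One small point to tidy up: you invoke Lemma~\ref{MBLemma} for $M_\lambda$, but that lemma is stated only for $m>1$. When $\alpha>0$ and $\alpha+\beta\le 1$ (which can happen, e.g.\ $\alpha=0.3$, $\beta=0.5$), your $\lambda=\alpha+\beta$ lies in $(\beta,1]$ and Lemma~\ref{MBLemma} does not apply directly; however in that range $M_\lambda(w^l)\le M_1(w^l)\le\rho_1$, so the integrand is simply bounded and the Gronwall conclusion is immediate. You should state this explicitly.
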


Recall that the non-negativity of the integral in~\eqref{PROPEQN2} follows from the monotonicity of $\zeta \mapsto \frac{G(\zeta)}{\zeta}$, which is due to the convexity of $G$.

\begin{proof}
For $l \geq l_0$ and $t\ge 0$ we put 
\begin{equation*}
M_G^l(t) =\sum_{i=1}^{\infty} G(i) w_i^l(t).
\end{equation*}
It follows from~\eqref{LMC} and~\eqref{GME} that
\begin{align}
\frac{d}{dt} M_G^l =& \frac{1}{2} \sum_{j=1}^{\infty} \sum_{k=1}^{\infty} \Big( \sum_{i=1}^{j+k-1} G(i) B_{j,k}^i - G(j)-G(k)\Big) a_{j,k}^l w_j^l w_k^l \\
&= \frac{1}{2} \sum_{j=1}^{\infty} \sum_{k=1}^{\infty} \big( G(j+k) - G(j)-G(k)\big) a_{j,k}^l w_j^l w_k^l \nonumber\\
 &\qquad-\sum_{j=1}^{\infty} \sum_{k=1}^{\infty} \Big(G(j+k)-\sum_{i=1}^{j+k-1} G(i) B_{j,k}^i\Big) a_{j,k}^l w_j^l w_k^l. \label{PROPEQN3}
\end{align}
On the one hand, since $G$ is convex, the function $\zeta \mapsto \frac{G(\zeta)}{\zeta}$ is non-decreasing on $(0,+\infty)$ and it follows from \eqref{LMC} that
\begin{align}
G(j+k) - \sum_{i=1}^{j+k-1} G(i) B_{j,k}^i & = \frac{G(j+k)}{j+k} \sum_{i=1}^{j+k-1} i B_{j,k}^i - \sum_{i=1}^{j+k-1} \frac{G(i)}{i} i B_{j,k}^i \\
& = \sum_{i=1}^{j+k-1} \left( \frac{G(j+k)}{j+k} - \frac{G(i)}{i} \right) i B_{j,k}^i\ge 0. \label{PROPEQN4}
\end{align}
On the other hand, by~\eqref{AGROWTHa} and~\eqref{GInequality},
\begin{align}
\sum_{j=1}^{\infty} \sum_{k=1}^{\infty} \Big( G(j+k) - G(j)-&G(k)\Big) a_{j,k}^l w_j^l w_k^l \nonumber \\
&\le 2 \sum_{j=1}^{\infty} \sum_{k=1}^{\infty}\frac{jG(k) +kG(j)}{j+k}a_{j,k}^l w_j^l w_k^l \nonumber\\
& \le 4A \sum_{j=1}^{\infty} \sum_{k=1}^{\infty}\frac{jG(k) +kG(j)}{j+k} j^{\alpha} k^{\beta} w_j^l w_k^l.
\end{align}
For some yet undetermined $p\in (1,2]$ and $(\theta_1,\theta_2)\in(0,1)^2$, we compute
\begin{align}
 \sum_{j=1}^{\infty} \sum_{k=1}^{\infty} &\frac{jG(k) +kG(j)}{j+k} j^{\alpha}k^{\beta} w_j^l w_k^l  \nonumber\\
 & \le \sigma_p (G) \sum_{j=1}^{\infty} \sum_{k=1}^{\infty} \frac{(j^{1+\alpha}k^{p+\beta} +k^{1+\beta}j^{p+\alpha})}{j+k}  w_j^l w_k^l\nonumber\\
 & = \sigma_p (G) \sum_{j=1}^{\infty} \sum_{k=1}^{\infty} \left[  \frac{j^{1+\alpha}k^{p+\beta}}{(j+k)^{\theta_1}(j+k)^{1-\theta_1}} + \frac{k^{1+\beta}j^{p+\alpha}}{(j+k)^{\theta_2}(j+k)^{1-\theta_2}} \right] w_j^l w_k^l \nonumber\\
& \leq \sigma_p(G) \big( M_{\alpha+1-\theta_1}(w^l) M_{p+\beta+\theta_1-1}(w^l) + M_{\beta+1-\theta_2}(w^l) M_{\alpha+p+\theta_2-1}(w^l)\big). \label{PROPEQN5}
\end{align}
Now two cases arise depending on the value of $\beta$.
\begin{itemize}
\item[\textbf{Case 1}:]$\beta<1$. Let us choose 
\begin{align*}
\theta_1 = \theta_2 = \hat{\theta}\in \Big(0,\frac{1-\beta}{2} \Big]\cap \Big(0,\frac{1}{2}\Big),~~~ p =1+\hat{\theta}\in(1,2).
\end{align*}
Then,
\begin{align*}
\alpha+1-\hat{\theta}\le &\beta +1 -\hat{\theta} < 1+\beta,\\
p+\alpha+\hat{\theta}-1 &\le p+\beta +\hat{\theta} -1=\beta+ 2 \hat{\theta}\le \beta +1-\beta =1.
\end{align*}
\item[\textbf{Case 2:}]$\beta=1$. Then $\alpha<1$ by \eqref{BCondb} and we choose 
\begin{align*}
\theta_1 =\alpha,~~~ p\in(1,2-\alpha),~~~\theta_2 \in(0,2-p-\alpha).
\end{align*}
Then, 
\begin{align*}
\alpha+1 -\theta_1 =1,~~~&p+\beta+\theta_1-1= p+\alpha \le 2-\theta_2 <2 =1+\beta,\\
\beta+1-\theta_2= 2-\theta_2<2, &~~~p+\alpha+\theta_2 -1 \le p+\alpha-1+2-p -\alpha=1
\end{align*}
\end{itemize} 
In both cases, for $\lambda = 1+\beta-\theta_2 \in (\beta,1+\beta)$, the above analysis, \eqref{LEMEST1} and \eqref{PROPEQN5} give
\begin{equation}
\sum_{j=1}^{\infty} \sum_{k=1}^{\infty} \frac{jG(k) +kG(j)}{j+k} j^{\alpha}k^{\beta} w_j^l w_k^l \le 2 \sigma_p(G) M_1(w^l) M_{\lambda}(w^l) \le 2\rho_1\sigma_p(G) M_{\lambda}(w^l). \label{PROPEQN6} 
\end{equation}
Now gathering  \eqref{PROPEQN3}, \eqref{PROPEQN4} and \eqref{PROPEQN6}
\begin{equation*}
\frac{d}{dt} M_G^l + \sum_{j=1}^{\infty} \sum_{k=1}^{\infty} \sum_{i=1}^{j+k-1}\Big(\frac{G(j+k)}{j+k}- \frac{G(i)}{i}\Big) B_{j,k}^i i a_{j,k}^l w_j^l w_k^l \le 8A \sigma_p(G)\rho_1 M_{\lambda}(w^l).  
\end{equation*}

Now, let $T>0$ and $t\in[0,T]$. Integrating the above differential inequality over $[0,t]$, we find
\begin{align*}
M_G^l(t) +& \int_0^t\sum_{j=1}^{\infty} \sum_{k=1}^{\infty} \sum_{i=1}^{j+k-1}\Big(\frac{G(j+k)}{j+k}- \frac{G(i)}{i}\Big) B_{j,k}^i i a_{j,k}^l w_j^l(\tau) w_k^l(\tau) d\tau \nonumber\\
&\le M_G^l(0) + 8A \sigma_p(G)\rho_1 \int_0^T M_{\lambda}(w^l(\tau)) d\tau. 
\end{align*}
Since $\lambda<1+\beta$, the right hand side of the above inequality is finite and bounded uniformly with respect to $l\ge l_0$ by \eqref{FOM}, which completes the proof.
\end{proof}

\subsection{Proof of Theorem~\ref{TH1}}

The analysis performed in the previous section provides all information needed to control the behaviour of $w^l$ for large values of $i$. To guarantee	compactness, we need an estimate with respect to time which is already established in \cite[Lemma~3.4]{Laurencot 2001I} and we recall it now.

\begin{lemma}
Let $T\in(0,+\infty)$ and $i \geq 1$. There exists a constant $\Pi_i(T)$ depending only on $A$, $\rho_1$, $i$ and $T$ such that, for each $l \geq i$,
\begin{align}
\int_0^T \Big| \frac{dw_i^l}{dt}(t)\Big| dt \leq \Pi_i(T). \label{DERVBOUND}
\end{align}
\end{lemma}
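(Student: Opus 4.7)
My plan is to split the right-hand side of \eqref{TNLDCBE} into its nonnegative gain term $G_i^l$ and loss term $L_i^l$, control the loss by a uniform-in-time pointwise bound, and then exploit the trivial mass-balance identity
\[
	\int_0^T G_i^l(t)\,dt - \int_0^T L_i^l(t)\,dt = w_i^l(T) - w_i^{\rm in,l}
\]
to bypass estimating the gain directly. This detour is what I expect to be the main conceptual point.

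First, for the loss, I will use that $\min\{i,l\}^{\alpha_+} \le i^{\alpha_+}$ and $\min\{j,l\}^{\alpha_+} \le j^{\alpha_+}$ give $a_{i,j}^l \le A(i^\alpha j^\beta + i^\beta j^\alpha)$. Since $\alpha\le\beta\le 1$ and $j\ge 1$, every power $j^\alpha$ and $j^\beta$ is dominated by $j$, so $M_\alpha(w^l(t)) + M_\beta(w^l(t)) \le 2 M_1(w^l(t))\le 2\rho_1$ by \eqref{LEMEST1}. Combined with the pointwise bound $w_i^l(t)\le \rho_1/i$ coming from $i w_i^l(t)\le M_1(w^l(t))$, and with $i^\alpha + i^\beta\le 2i$, one finds
\[
	L_i^l(t) = w_i^l(t) \sum_{j\ge 1} a_{i,j}^l w_j^l(t) \le \frac{\rho_1}{i}\cdot 2Ai\rho_1 = 2A\rho_1^2, \qquad t\ge 0,
\]
so that $\int_0^T L_i^l(t)\,dt \le 2A\rho_1^2 T$.

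Next, integrating \eqref{TNLDCBE} on $[0,T]$ and using $G_i^l, L_i^l \ge 0$, together with $w_i^l(T)\le \rho_1/i$ (same argument as above applied at time $T$), I get
\[
	\int_0^T G_i^l(t)\,dt = w_i^l(T) - w_i^{\rm in,l} + \int_0^T L_i^l(t)\,dt \le \frac{\rho_1}{i} + 2A\rho_1^2 T.
\]
Summing the two nonnegative contributions then yields
\[
	\int_0^T \left|\frac{d w_i^l}{dt}(t)\right| dt \le \int_0^T G_i^l(t)\,dt + \int_0^T L_i^l(t)\,dt \le \frac{\rho_1}{i} + 4A\rho_1^2 T,
\]
which can be taken as $\Pi_i(T)$.

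The only delicate point is why a pointwise estimate on $G_i^l$ is not pursued directly: using the crude bound $i B_{p,k}^i \le p+k$ from \eqref{LMC} forces integrands involving moments $M_m(w^l)$ with $m$ up to $1+\beta$, and \eqref{FOM} only guarantees time-integrability on $[0,T]$ strictly below $m=1+\beta$. The mass-balance trick sidesteps this borderline issue entirely, trading the gain estimate for the elementary uniform loss bound plus the pointwise value $w_i^l(T)\le \rho_1/i$ coming from conservation of the first moment.
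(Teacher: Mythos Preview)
Your argument is correct. The paper does not actually give a proof of this lemma; it simply recalls it from \cite[Lemma~3.4]{Laurencot 2001I}. Your proof is self-contained and uses only the mass conservation $M_1(w^l(t))=M_1(w^{\rm in,l})\le\rho_1$ from Proposition~\ref{prop0} together with the elementary bounds $a_{i,j}^l\le a_{i,j}$, $j^\alpha,j^\beta\le j$ for $j\ge 1$, and $w_i^l(t)\le \rho_1/i$. The key step---controlling the nonnegative gain term by integrating the equation and using the already-established loss bound rather than estimating the gain pointwise---is the standard device in this context, and your closing remark pinpoints exactly why a direct pointwise estimate on $G_i^l$ via $iB_{p,k}^i\le p+k$ would fall short: it brings in $M_{1+\beta}(w^l)$, for which \eqref{FOM} gives the non-integrable-at-$0$ bound $C(1+1/t)$. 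Your constant $\Pi_i(T)=\rho_1/i+4A\rho_1^2 T$ depends only on $A$, $\rho_1$, $i$, $T$ as required, and in fact your argument works for all $l\ge 1$, not just $l\ge i$.
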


We are now in a position to prove Theorem~\ref{TH1}. For that purpose we first recall a refined version of the de la Vall\'{e}e-Poussin theorem for integrable functions \cite[Theorem~7.1.6]{BLL 2019}.

\begin{theorem} \label{DlVPthm}
Let $(\Sigma,\mathcal{A}, \nu)$ be a measured space and consider a function $w \in L^1(\Sigma,\mathcal{A}, \nu)$. Then there exists a function $ G\in \mathcal{G}_{1,\infty}$  such that
\begin{align*}
G(|w|) \in L^1(\Sigma,\mathcal{A}, \nu).
\end{align*}
\end{theorem}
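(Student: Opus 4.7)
The strategy is the standard de la Vall\'{e}e--Poussin construction, combined with a careful choice of an auxiliary sequence to secure the concavity of $G'$ and the polynomial majorization conditions required for membership in $\mathcal{G}_{1,\infty}$. The starting point is the layer-cake formula, which reduces integrability of $G(|w|)$ to a one-dimensional integrability statement: for any convex $G$ with $G(0)=0$ and $G'=g$ nonnegative, Fubini--Tonelli yields
\begin{equation*}
	\int_\Sigma G(|w|)\,d\nu = \int_0^\infty g(s) H(s)\,ds = \int_0^\infty u(\tau)\Phi(\tau)\,d\tau,
\end{equation*}
where $H(t) := \nu(\{|w|>t\})$, $\Phi(t) := \int_t^\infty H(s)\,ds$, and $u := g'$. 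Since $w \in L^1(\nu)$, $H \in L^1(0,\infty)$ and $\Phi(t) \to 0$ as $t\to\infty$.

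The key step is to construct $u$ nonnegative, nonincreasing, with $\int u = \infty$ and $\int u\Phi < \infty$. I would proceed by selecting inductively a strictly increasing sequence $0 = t_0 < t_1 < \cdots$ with $\Phi(t_k) \le 2^{-k}$ and with the gaps $\Delta_k := t_{k+1}-t_k$ nondecreasing and growing arbitrarily fast; both demands are compatible because $\Phi\to 0$ allows $t_{k+1}$ to be taken as large as required at each step. Setting $u(\tau) := 1/\Delta_k$ on $[t_k, t_{k+1})$ makes $u$ nonincreasing with $\int_0^\infty u\,d\tau = \sum_k 1 = \infty$ and $\int_0^\infty u\Phi\,d\tau \le \sum_k \Phi(t_k) \le \sum_k 2^{-k} < \infty$. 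The function $g(s) := \int_0^s u(\tau)\,d\tau$ is then continuous, piecewise linear, nondecreasing, concave, and tends to $+\infty$; and $G(\zeta) := \int_0^\zeta g(s)\,ds$ belongs to $C^1([0,\infty)) \cap W^{2,\infty}_{\mathrm{loc}}(0,\infty)$, is convex, and satisfies $G(0)=0$, $G'(0)=g(0)=0\ge 0$, $G'$ concave, $G'(\zeta)\to\infty$, and (by convexity) $G(\zeta)/\zeta\to\infty$.

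For the polynomial majorizations $\sigma_p(G) < \infty$ for each $p\in(1,2]$, I would exploit the freedom in choosing $t_k$: by taking $t_k$ to grow super-exponentially (while preserving $\Phi(t_k)\le 2^{-k}$), $g$ grows slower than any positive power of $s$ at infinity (since $g(t_k) = k$), so $G(\zeta) = o(\zeta^p)$ for every $p>1$; near $\zeta=0$, $G(\zeta)\sim \zeta^2/(2\Delta_0)$, which is already $O(\zeta^p)$ for $p\le 2$. Hence $G\in\mathcal{G}_{1,\infty}$ and $G(|w|)\in L^1(\nu)$. The principal subtlety lies in reconciling three competing constraints on $G$---concavity of $G'$ (forcing $g$ to grow at most linearly), polynomial majorization for every $p\in(1,2]$ (forcing $g$ to grow slower than any positive power), and integrability of $G(|w|)$ (requiring $g\to\infty$ at least fast enough to strictly improve upon the bare integrability of $w$)---and the critical observation is that a single choice of rapidly growing $t_k$ simultaneously delivers fast decay of $\Phi(t_k)$, large nondecreasing gaps $\Delta_k$, and slow growth of $g$, fulfilling all three conditions at once.
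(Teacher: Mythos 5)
The paper does not give a proof of this theorem; it is quoted from \cite[Theorem~7.1.6]{BLL 2019}, so there is no in-paper argument to compare against. Your construction is the standard de la Vall\'ee Poussin argument via the layer-cake / Fubini--Tonelli reduction, enhanced with the two additional features that the class $\mathcal{G}_{1,\infty}$ demands: you make $u=G''$ nonincreasing and piecewise constant (giving concavity of $G'$ and local boundedness of $G''$), and you slow the growth of $g=G'$ down to sub-power rate by pushing the nodes $t_k$ out super-exponentially while keeping the gaps $\Delta_k$ nondecreasing, which yields $\sigma_p(G)<\infty$ for every $p\in(1,2]$ together with $G'\to\infty$ and $G(\zeta)/\zeta\to\infty$. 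The reduction $\int_\Sigma G(|w|)\,d\nu=\int_0^\infty u\Phi\,d\tau$, the bound $\int u\Phi\le\sum_k\Phi(t_k)<\infty$, the near-origin estimate $G(\zeta)\sim\zeta^2/(2\Delta_0)$, and the compatibility of all three choices on $t_k$ are correct. One cosmetic slip: $\Phi(t_0)=\Phi(0)=\|w\|_{L^1(\nu)}$ need not satisfy $\Phi(t_0)\le 2^0$, so the requirement $\Phi(t_k)\le 2^{-k}$ should be imposed only for $k\ge 1$, or replaced by $\Phi(t_k)\le C\,2^{-k}$ with $C:=1+\|w\|_{L^1(\nu)}$; this changes nothing of substance. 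Your closing remark also slightly inverts the roles: $g\to\infty$ is forced by the $\mathcal{G}_{1,\infty}$ clause, not by the integrability of $G(|w|)$, which is a constraint limiting the growth of $g$ -- but this is a matter of exposition, not of the proof itself.
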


\begin{proof} [Proof of Theorem \ref{TH1}]
We apply Theorem~\ref{DlVPthm} with $\Sigma =\mathbb{N}$ and $\mathcal{A} = 2^{\mathbb{N}}$, the set of all subsets of $\mathbb{N}$. Defining the measure $\nu$ by
\begin{align*}
	\nu(J) =\sum_{i\in J} w_i^{\rm{in}},\qquad J \subset \mathbb{N},
\end{align*}
the condition $w^{in}\in Y_1^+$  ensures that $\zeta \mapsto \zeta$  belongs to $L^1(\Sigma,\mathcal{A}, \nu)$. By Theorem~\ref{DlVPthm} there is thus a function $G_0 \in \mathcal{G}_{1,\infty}$ such that $ G_0(w^{\rm{in}})$  belongs to $L^1(\Sigma,\mathcal{A}, \nu)$; that is,
\begin{align}
	\mathcal{G}_0 =\sum_{i=1}^{\infty} G_0(i) w_i^{\rm{in}} <\infty. \label{Gzero}
\end{align}
In the following, we denote by $C$ any positive constant depending only on $A$, $\alpha$, $\beta$, $\rho_0$, $\rho_1$, $G_0$ and $\mathcal{G}_0$. The dependence of $C$ upon additional parameters will be specified explicitly.

By~\eqref{LEMEST1} and~\eqref{DERVBOUND} the sequence $(w_i^l)_{l\ge i}$ is bounded in $W^{1,1}(0,T)$ for each $i\ge 1$ and $T\in(0,+\infty)$. We then infer from the Helly theorem \cite[pp.~372--374]{KF 1970} that there are a subsequence of $(w_i^l)_{l\ge l_0}$, still denoted by $(w_i^l)_{l\ge l_0}$, and a sequence $w=(w_i)_{i\ge 1}$ of functions of locally bounded variation such that
\begin{equation}
\lim_{l \to +\infty} w_i^l(t) = w_i(t) \label{wLIM}
\end{equation}
for each $i\ge 1$ and $t\ge 0$. Clearly $w_i(t) \ge 0$ for $i \ge 1$ and $t\ge 0$ and it follows from~\eqref{LEMEST1} and~\eqref{wLIM} that $w(t) \in Y_1^+$ with 
\begin{equation}
\|w(t)\|_{1} \le \|w^{\rm{in}}\|_{1}, \qquad  t\ge 0.\label{Y1NORMBOUND}
\end{equation}
  	Furthermore, as $G_0\in  \mathcal{G}_{1,\infty}$, we infer from \eqref{Gzero} and Proposition~\ref{prop1} that, for each $T \geq 0$ and $l\geq l_0$, there holds
 	\begin{equation}
 	\sum_{i=1}^{\infty} G_0(i) w_i^l(t) \leq C(T), \qquad t\in [0,T], \label{G_0bound1l}
 	\end{equation}
 	and
 	\begin{equation}
 	0 \leq \int_0^T\sum_{j=1}^{\infty} \sum_{k=1}^{\infty} \sum_{i=1}^{j+k-1}\Big(G_1(j+k)-G_1(i)\Big)i B_{j,k}^i  a_{j,k}^l w_j^l(s) w_k^l(s) ds \leq C(T), \label{G_0bound2l}
 	\end{equation}
 		where 
 	\begin{equation*}
 	G_1(\zeta) = \frac{G_0(\zeta)}{\zeta} ~~~ \text{for} ~~~\zeta \geq 0.
 	\end{equation*}
 	A consequence of \eqref{G_0bound2l} and the monotonicity properties of $G_1$ is that, for $i \geq 1$, $T\geq 0$ and $l \geq \max\{i+1,l_0\}$,
 	\begin{align*}
 	0 \leq \int_0^T\sum_{j=i+1}^{\infty} \sum_{k=1}^{j-1} \Big(G_1(j+k)-G_1(i)\Big)i B_{j,k}^i  a_{j,k}^l w_j^l(s) w_k^l(s) ds \leq C(T).
 	\end{align*}
 	Hence
 	\begin{align}
 	0 \leq \int_0^t\sum_{j=i+1}^{\infty} \sum_{k=1}^{j-1} \Big(G_1(j+k)-G_1(i)\Big) B_{j,k}^i  a_{j,k}^l w_j^l(s) w_k^l(s) ds \leq C(T). \label{G_0bound3l}
 	\end{align}

 	Due to~\eqref{wLIM}, we may let $l\to +\infty$ in~\eqref{G_0bound1l}, \eqref{G_0bound2l}, and~\eqref{G_0bound3l} and use Fatou's lemma to obtain 
\begin{equation}
 	\sum_{i=1}^{\infty} G_0(i) w_i(t) \leq C(T), \qquad t\in [0,T],\label{G_0bound1}
 	\end{equation}
\begin{equation}
 	0 \leq \int_0^T\sum_{j=1}^{\infty} \sum_{k=1}^{\infty} \sum_{i=1}^{j+k-1}\Big(G_1(j+k)-G_1(i)\Big)i B_{j,k}^i  a_{j,k} w_j(s) w_k(s) ds \leq  C(T),\label{G_0bound2}
 	\end{equation}
 	\begin{equation}
 0 \leq \int_0^T\sum_{j=i+1}^{\infty} \sum_{k=1}^{j-1} \Big(G_1(j+k)-G_1(i)\Big) B_{j,k}^i  a_{j,k} w_j(s) w_k(s) ds \leq C(T), \qquad i\ge 1.\label{G_0bound3}
 	\end{equation}
 	
Since $G_0\in \mathcal{G}_{1,\infty}$, it readily follows from \eqref{wLIM}, \eqref{G_0bound1l} and \eqref{G_0bound1} that 
\begin{align}
\lim_{l\to \infty} \|w^l(t) - w(t)\|_{1} =0, \qquad \text{for all}~~~ t\ge 0. \label{NORMCONV}
\end{align}
Hence, for $t\geq 0$
\begin{align}
\|w(t)\|_{1} =\lim_{l\to \infty} \|w^l(t)\|_{1} =\lim_{l\to \infty} \|w^{\rm{in},l}\|_{1} = \|w^{\rm{in}}\|_{1}, 
\end{align}
so that $w$ satisfies \eqref{MC}. Owing to \eqref{LEMEST1}, \eqref{wLIM}, \eqref{G_0bound1l}, \eqref{G_0bound3l}, \eqref{G_0bound1} and \eqref{G_0bound3}, we may then argue  exactly in the same way as in the proofs of \cite[Theorem~2.1]{MAP 23} and\cite[Theorem~3.1]{Laurencot 2001I} to show that $w$ is a solution to \eqref{NLDCBE}--\eqref{NLDCBEIC} on $[0,+\infty)$ in the sense of Definition~\ref{DEF1}.

We are left with proving the moment estimates~\eqref{ZEROMOM}, \eqref{MMOMTIM} and~\eqref{MMOMTIMI}. The last ones readily follow from~\eqref{FOM} and~\eqref{FOM1}, respectively, by~\eqref{wLIM} and a lower semicontinuity argument, whereas \eqref{ZEROMOM} is deduced from \eqref{LEMEST2} and \eqref{NORMCONV}.
\end{proof}

\section{Continuous dependence on Initial Data and Uniqueness}\label{SEC4}
This section is devoted to the proof of Theorem~\ref{TH2}. We begin with a preliminary result concerning continuous dependence for a suitable class of solutions.

\begin{prop} \label{CDP}
Assume that the kinetic coefficients satisfy~\eqref{LMC}, \eqref{AGROWTH} and \eqref{BCond}  and let $w$ and $\hat{w}$ be two solutions to \eqref{NLDCBE}--\eqref{NLDCBEIC} on $[0,+\infty)$ with initial conditions $w(0) = w^{\rm{in}}\in Y_{1+\beta}^+$  and $\hat{w}(0)= \hat{w}^{\rm{in}} \in Y_{1+\beta}^+$, respectively. Assume further that there are $T>0$ and $R>0$ such that
\begin{equation}
M_{1+\beta}(w(t))\le R,\qquad  M_{1+\beta}(\hat{w}(t))\le R , \qquad t\in[0,T]. \label{CDP1}
\end{equation}
Then there is $\Theta(T,R)>0$ depending only on $A$, $T$ and $R$ such that
\begin{equation}
\sup_{t\in[0,T]} \|w(t)-\hat{w}(t) \|_{1} \le \Theta(T,R) \|w(t)-\hat{w}(t) \|_{1}. \label{CDP2}
\end{equation}

\begin{proof}
For $i \ge 1$, we put $\xi_i = w_i - \hat{w}_i$ and $\varepsilon_i = \mathrm{sign}(\xi_i)$, where $\mathrm{sign}(h) =h/|h|$ if $h\in \mathbb{R}\setminus\{0\}$ and $\mathrm{sign}(0)=0$. Now, for $l\ge 2$ and  $t\in(0,T)$, we infer from~\eqref{IVOE} that
\begin{align}
\sum_{i=1}^l i |\xi_i(t)| = \sum_{i=1}^l i |\xi_i(0)| +\int_0^t \sum_{p=1}^3 \Upsilon_p^l(\tau) d\tau, \label{UPS}
\end{align}
where 
\begin{align*}
\Upsilon_1^l & =\frac{1}{2} \sum_{i=1}^l \sum_{j=i+1}^l\sum_{k=1}^{j-1}i \varepsilon_i B_{j-k,k}^i a_{j-k,k} (w_{j-k} w_k -\hat{w}_{j-k} \hat{w_k})\\
& \qquad\qquad - \sum_{i=1}^l \sum_{j=1}^{l-i}  i \varepsilon_i a_{i,j} (w_iw_j-\hat{w}_i \hat{w}_j)\\
&= \frac{1}{2} \sum_{i=1}^{l-1} \sum_{j=1}^{l-i} \Big(\sum_{s=1}^{i+j-1} s \varepsilon_s B_{i,j}^s- i\varepsilon_i -j\varepsilon_j\Big)a_{i,j} \big( w_i w_j - \hat{w}_i \hat{w}_j \big) \\
& = \frac{1}{2} \sum_{i=1}^{l-1} \sum_{j=1}^{l-i} \Big(\sum_{s=1}^{i+j-1} s \varepsilon_s B_{i,j}^s- i\varepsilon_i -j\varepsilon_j\Big)a_{i,j} \big( w_j + \hat{w}_j \big) \xi_i,
\end{align*}
\begin{align*}
\Upsilon_2^l = -\sum_{i=1}^l \sum_{j=l+1-i}^{\infty} i \varepsilon_ia_{i,j} (w_iw_j- \hat{w}_i \hat{w}_j),
\end{align*}
\begin{align*}
\Upsilon_3^l= \frac{1}{2}\sum_{i=1}^l \sum_{j=l+1}^{\infty} \sum_{k=1}^{j-1}i \varepsilon_iB_{j-k,k}^i a_{j-k,k} \big(w_{j-k,k} w_k- \hat{w}_{j-k} \hat{w}_k\Big).
\end{align*}
Noticing that, by \eqref{LMC},
\begin{align*}
\Big(\sum_{s=1}^{i+j-1} s\varepsilon_s B_{i,j}^s - i \varepsilon_i -&j \varepsilon_j\Big) \xi_i = \Big(\sum_{s=1}^{i+j-1} s\varepsilon_s \varepsilon_i B_{i,j}^s - i  -j \varepsilon_i \varepsilon_j\Big) |\xi_i|\\
&\le \Big(\sum_{s=1}^{i+j-1} s  B_{i,j}^s - i  +j  \Big) |\xi_i| \le 2j |\xi_i|.
\end{align*}
Consequently, using \eqref{AGROWTH} and \eqref{CDP1}, we obtain
\begin{align}
\big| \Upsilon_1^l \big|  \leq 2 A \Big(\sum_{j=1}^l j^{1+\beta} (w_j +\hat{w}_j) \Big) \sum_{i=1}^l i |\xi_i| \le 4AR\sum_{i=1}^{\infty} i |\xi_i|. \label{UPS1}
\end{align}

We next infer from~\eqref{AGROWTH}  that
\begin{align*}
\Big|\sum_{i=1}^l \sum_{j=l+1-i}^{\infty} i \varepsilon_i a_{i,j} w_i w_j \Big| & \le A \sum_{i=1}^l \sum_{j=l+1-i}^{\infty} \big(i^{1+\alpha}j^{\beta} + i^{1+\beta} j^{\alpha}\big) w_iw_j \\
& \le A \sum_{i=1}^l i^{1+\beta} w_i \sum_{j=l+1-i}^\infty j w_j,
\end{align*}
and it follows from \eqref{MC}, \eqref{CDP1} and Lebesgue's dominated convergence theorem that
\begin{align*}
\lim_{l\to + \infty} \int_0^t \Big| \sum_{i=1}^l \sum_{j=l+1-i}^{\infty} i \varepsilon_i a_{i,j} w_i(\tau) w_j(\tau) \Big| d\tau =0.
\end{align*}
We proceed similarly for $\hat{w}$ and conclude that 
\begin{align}
\lim_{l\to + \infty} \int_0^t \Upsilon_2^l(\tau) d\tau =0. \label{UPS2}
\end{align}

It finally follows from~\eqref{LMC} and~\eqref{AGROWTH} that
\begin{align*}
	& \left| \frac{1}{2}\sum_{i=1}^l \sum_{j=l+1}^{\infty} \sum_{k=1}^{j-1} i \varepsilon_iB_{j-k,k}^i a_{j-k,k} w_{j-k,k} w_k \right| \\
	& \qquad \le \frac{1}{2} \sum_{j=l+1}^{\infty} \sum_{k=1}^{j-1} \sum_{i=1}^l i B_{j-k,k}^i a_{j-k,k} w_{j-k,k} w_k \\
	& \qquad \le \frac{1}{2} \sum_{j=l+1}^{\infty} \sum_{k=1}^{j-1} j a_{j-k,k} w_{j-k,k} w_k \\
	& \qquad = \frac{1}{2} \sum_{k=1}^{\infty} \sum_{j=\max\{l+1,k+1\}}^{\infty}  j a_{j-k,k} w_{j-k,k} w_k \\
	& \qquad = \frac{1}{2} \sum_{k=1}^{\infty} \sum_{j=\max\{l-k+1,1\}}^{\infty}  (j+k) a_{j,k} w_j w_k \\
	& \qquad \le \frac{A}{2} \sum_{k=1}^{\infty} \sum_{j=\max\{l-k+1,1\}}^{\infty}  \left( j^{1+\alpha} k^\beta + j^{1+\beta} k^\alpha + j^\alpha k^{1+\beta} + j^\beta k^{1+\alpha} \right) w_j w_k \\
	& \qquad \le A \sum_{k=1}^{\infty} \sum_{j=\max\{l-k+1,1\}}^{\infty}  \left( j^{1+\beta} k + j k^{1+\beta} \right) w_j w_k \\
	& \qquad = A \sum_{k=1}^{l} \sum_{j=l-k+1}^{\infty}  \left( j^{1+\beta} k + j k^{1+\beta} \right) w_j w_k + A \sum_{k=l+1}^{\infty} \sum_{j=1}^{\infty}  \left( j^{1+\beta} k + j k^{1+\beta} \right) w_j w_k,
\end{align*}	
and we use once more~\eqref{MC}, \eqref{CDP1} and Lebesgue's dominated convergence theorem to conclude that	
\begin{equation*}
	\lim_{l\to\infty} \int_0^t \left| \frac{1}{2}\sum_{i=1}^l \sum_{j=l+1}^{\infty} \sum_{k=1}^{j-1} i \varepsilon_iB_{j-k,k}^i a_{j-k,k} w_{j-k,k}(\tau) w_k(\tau) \right| d\tau = 0.
\end{equation*}
As the same result is also true for $\hat{w}$, we end up with
\begin{align}
\lim_{l\to +\infty} \int_0^t \Upsilon_3^l(\tau) d\tau =0.\label{UPS3}
\end{align}
Owing to \eqref{UPS1}, \eqref{UPS2} and \eqref{UPS3} we may pass to the limit as $l\to +\infty$ in \eqref{UPS} and obtain 
\begin{align*}
\sum_{i=1}^{\infty} i |\xi_i(t)| \leq \sum_{i=1}^{\infty} i |\xi_i(0)| + 4AR \int_0^t \sum_{i=1}^{\infty} i |\xi_i(\tau)| d\tau.
\end{align*}
Applying the Gronwall lemma then completes the proof of Proposition~\ref{CDP}.
\end{proof}
\end{prop}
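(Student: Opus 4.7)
The plan is to establish a Gronwall-type inequality for $\|w(t)-\hat w(t)\|_1$, using the moment hypothesis~\eqref{CDP1} to absorb the superlinear growth of the collision kernel. (Note that the right-hand side of~\eqref{CDP2} is surely meant to read $\|w^{\rm{in}}-\hat w^{\rm{in}}\|_1$, since the printed inequality would otherwise be trivial.) Setting $\xi_i := w_i - \hat w_i$ and $\varepsilon_i := \mathrm{sign}(\xi_i)$ (with $\mathrm{sign}(0)=0$), the main difficulty is that the series $\sum_i i|\xi_i|$ cannot be differentiated term-by-term in~\eqref{IVOE} because the infinite double sums involved need not converge absolutely. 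I therefore work at the level of the integrated equation truncated to indices $1\le i\le l$ and pass to the limit $l\to\infty$ at the end.

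Writing the difference of the integrated versions of~\eqref{IVOE} for $w_i$ and $\hat w_i$, multiplying by $i\varepsilon_i$, summing over $i\in\{1,\dots,l\}$, and reorganizing via Fubini, the resulting identity splits into three pieces. The first piece gathers the \emph{interior} contributions with both summation indices in $\{1,\dots,l\}$; after using $w_j w_k-\hat w_j\hat w_k = \xi_j w_k + \hat w_j\xi_k$ to linearize in $\xi$, it carries the coefficient $\bigl(\sum_{s=1}^{i+j-1} s\varepsilon_s B_{i,j}^s - i\varepsilon_i - j\varepsilon_j\bigr)\xi_i$. The key cancellation is the pointwise estimate
\[
\Bigl(\sum_{s=1}^{i+j-1} s\varepsilon_s B_{i,j}^s - i\varepsilon_i - j\varepsilon_j\Bigr)\xi_i = \Bigl(\sum_{s=1}^{i+j-1} s\varepsilon_s\varepsilon_i B_{i,j}^s - i - j\varepsilon_i\varepsilon_j\Bigr)|\xi_i| \le \Bigl(\sum_{s=1}^{i+j-1} s B_{i,j}^s - i + j\Bigr)|\xi_i| = 2j|\xi_i|,
\]
which uses $\varepsilon_i\xi_i = |\xi_i|$, $|\varepsilon_s|\le 1$, and the local mass-conservation identity $\sum_{s=1}^{i+j-1} sB_{i,j}^s = i+j$ from~\eqref{LMC}. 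Combined with the bound $a_{i,j}\le A(i^\alpha j^\beta + i^\beta j^\alpha) \le 2A(i j^\beta + i j^\alpha)$ (since $i^\alpha, i^\beta\le i$ for $i\ge 1$ under~\eqref{AGROWTH}) and the assumption~\eqref{CDP1} on $M_{1+\beta}(w)$ and $M_{1+\beta}(\hat w)$, the interior piece is dominated by $C(A)R\sum_{i\ge 1}i|\xi_i|$.

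The remaining two pieces are tails: a loss tail with $j\ge l-i+1$, and a gain tail arising from the index $j\ge l+1$ in the collision sum. I plan to show that the time integrals of both tails tend to $0$ as $l\to\infty$ by dominating them pointwise in $\tau$ by $l$-independent integrable majorants and invoking Lebesgue's dominated convergence theorem; the $M_{1+\beta}$-bound provides the required domination, since, for example, the loss tail is controlled by expressions of the form $\sum_{i\ge 1} i^{1+\beta}w_i \sum_{j\ge l-i+1}j w_j$, whose inner sums vanish pointwise as $l\to\infty$ while remaining dominated by the mass and the $(1+\beta)$-moment. The main obstacle is the bookkeeping for the gain tail: one must reindex $\tfrac12\sum_{j\ge l+1}\sum_{k=1}^{j-1}$ by setting $n=j-k$ and collapse the resulting $i$-sum via~\eqref{LMC} to expose a $(n+k)a_{n,k}w_n w_k$ structure, which is then majorized by $CA(n^{1+\beta}k + nk^{1+\beta})w_n w_k$ and handled by dominated convergence. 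Once the tails are discarded, passage to the limit $l\to\infty$ (with monotone convergence on the left) yields
\[
\|\xi(t)\|_1 \le \|\xi(0)\|_1 + C(A,R)\int_0^t\|\xi(\tau)\|_1\,d\tau,
\]
and Gronwall's lemma delivers~\eqref{CDP2} with $\Theta(T,R)=\exp(C(A,R)T)$.
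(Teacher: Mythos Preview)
Your proposal is correct and follows essentially the same approach as the paper: the same sign-trick with $\xi_i$ and $\varepsilon_i$, the same three-piece decomposition into an interior term plus loss and gain tails, the identical key cancellation $\bigl(\sum_s s\varepsilon_s B_{i,j}^s - i\varepsilon_i - j\varepsilon_j\bigr)\xi_i \le 2j|\xi_i|$ via~\eqref{LMC}, the same dominated-convergence arguments for the tails using the $M_{1+\beta}$ bound, and Gronwall to finish. You also correctly spotted the typo in the right-hand side of~\eqref{CDP2}.
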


A first consequence of Proposition~\ref{CDP} is the well-posedness of \eqref{NLDCBE}--\eqref{NLDCBEIC} in $Y_{1+\beta}^+$.

\begin{corollary} \label{UCOR}
Assume that the kinetic coefficients satisfy \eqref{LMC}, \eqref{AGROWTH} and \eqref{BCond}. Given $w^{in} \in Y_{1+\beta}^+$ and $\rho_0 \le \rho_1$ such that
\begin{equation*}
\rho_0 \le M_0(w^{\rm{in}}) \le M_1(w^{\rm{in}})\le \rho_1,
\end{equation*}
 there is a unique mass-conserving solution $w$ to \eqref{NLDCBE}--\eqref{NLDCBEIC} on $[0,+\infty)$ satisfying
\begin{align*}
\sup_{t\in [0,T]}M_{1+\beta}(w(t)) <+\infty
\end{align*}
for each $T\in(0,+\infty)$. In addition,
\begin{align*}
\rho_0 \le M_0(w(t)) \le M_1((w(t))= M_1(w^{\rm{in}}) \le \rho_1.
\end{align*}
\end{corollary}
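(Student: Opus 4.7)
The plan is to obtain both existence and uniqueness as immediate consequences of the results already established in the excerpt, namely Theorem~\ref{TH1} and Proposition~\ref{CDP}; the corollary is essentially their conjunction in the smaller space $Y_{1+\beta}^+$.

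For the existence part, I would apply Theorem~\ref{TH1} directly to the datum $w^{\rm{in}} \in Y_{1+\beta}^+ \subset Y_1^+$. This produces a mass-conserving solution $w$ on $[0,+\infty)$ satisfying the lower bound $M_0(w(t)) \ge M_0(w^{\rm{in}}) \ge \rho_0$ from~\eqref{ZEROMOM} and the mass conservation $M_1(w(t)) = M_1(w^{\rm{in}}) \le \rho_1$ from~\eqref{MC}. The key point is that $M_{1+\beta}(w^{\rm{in}})$ is finite by hypothesis, so the propagation of moments estimate~\eqref{MMOMTIMI} applied with $m = 1+\beta$ yields
\begin{equation*}
M_{1+\beta}(w(t)) \le \max\bigl\{ M_{1+\beta}(w^{\rm{in}}), \mu_{1+\beta} \bigr\}, \qquad t>0,
\end{equation*}
which is even stronger than the required bound $\sup_{t\in[0,T]} M_{1+\beta}(w(t)) < +\infty$ for every $T>0$.

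For the uniqueness part, let $w$ and $\hat{w}$ be any two solutions on $[0,+\infty)$ with the same initial datum $w^{\rm{in}}$ satisfying $\sup_{t\in[0,T]} M_{1+\beta}(\cdot(t)) < +\infty$ for every $T>0$. Fix $T>0$ and set
\begin{equation*}
R := \max\Bigl\{ \sup_{t\in[0,T]} M_{1+\beta}(w(t)),\ \sup_{t\in[0,T]} M_{1+\beta}(\hat{w}(t)) \Bigr\} < +\infty,
\end{equation*}
so that the hypotheses of Proposition~\ref{CDP} are met on $[0,T]$ with this $R$. The conclusion~\eqref{CDP2} (which, by the Gronwall argument in its proof, compares $\sup_{[0,T]} \|w-\hat{w}\|_1$ to $\|w(0)-\hat{w}(0)\|_1$) then forces
\begin{equation*}
\sup_{t\in[0,T]} \|w(t)-\hat{w}(t)\|_1 \le \Theta(T,R)\, \|w^{\rm{in}} - w^{\rm{in}}\|_1 = 0.
\end{equation*}
Hence $w \equiv \hat{w}$ on $[0,T]$, and since $T$ is arbitrary, uniqueness holds on $[0,+\infty)$.

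There is no real obstacle here: the only thing to verify is that $w^{\rm{in}} \in Y_{1+\beta}$ is precisely what is needed to place the solution supplied by Theorem~\ref{TH1} into the class $L^\infty_{\rm loc}([0,+\infty); Y_{1+\beta})$ on which Proposition~\ref{CDP} is applicable, and this is delivered by the invariance estimate~\eqref{MMOMTIMI}.
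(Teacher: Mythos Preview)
Your proposal is correct and matches the paper's own proof, which simply says that existence follows from Theorem~\ref{TH1} and uniqueness from Proposition~\ref{CDP}. You have spelled out the details accurately, including the role of~\eqref{MMOMTIMI} in placing the constructed solution into the class where Proposition~\ref{CDP} applies, and your reading of~\eqref{CDP2} (with initial data on the right-hand side, as is clear from the Gronwall argument) is the intended one.
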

\begin{proof}
 The existence of at least one mass-conserving solutions to \eqref{NLDCBE}--\eqref{NLDCBEIC} on $[0,+\infty)$ with the properties stated in Corollary~\ref{UCOR} is given by Theorem~\ref{TH1} and its uniqueness is guaranteed by Proposition~\ref{CDP}. 
\end{proof}

\begin{proof}[Proof of Theorem~\ref{TH2}]
Given $w^{\rm{in}} \in \mathcal{S}(\rho_0, \rho_1, R)$, the unique mass-conserving solution $\Psi(\cdot, w^{in})$ to \eqref{NLDCBE}--\eqref{NLDCBEIC} on $[0,+\infty)$ given by Corollary~\ref{UCOR} remains in $\mathcal{S}(\rho_0, \rho_1, R)$ for all times according to Theorem~\ref{TH1} and Corollary~\ref{UCOR}, since $R\ge \mu_{1+\beta}$. As for the continuous dependence of $\Psi(\cdot, w^{\rm{in}})$ on the initial condition $w^{\rm{in}}$ in $\mathcal{S}(\rho_0, \rho_1, R)$, it is provided by Proposition~\ref{CDP}.
\end{proof}

\section{Stationary Solutions to \eqref{NLDCBE}--\eqref{NLDCBEIC}}\label{SEC5}

We now provide the proof of Theorem~\ref{TH3} which relies on a dynamical systems approach. First let us recall the following theorem from \cite[Proposition~22.13]{AMANN 90} or \cite[Proof of Theorem~5.2]{GPV 2004}.

\begin{theorem} \label{FPT}
Let $X$ be a Banach space, $Y$ be a subset of $X$, and $\Psi:[0,+\infty)\times Y\mapsto Y$ be a dynamical system with a positively invariant set $Z\subset Y$ which is a non-empty convex and compact subset of $X$. Then there is $x_0\in Z$ such that $\Psi(t,x_0)=x_0$ for all $t\ge 0$.
\end{theorem}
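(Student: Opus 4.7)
The plan is to obtain the common fixed point $x_0$ of the whole semigroup $\{\Psi(t,\cdot)\}_{t\ge 0}$ by first producing fixed points for the discrete times $1/n$ via the Schauder--Tikhonov fixed-point theorem, and then extracting a limit point that is fixed by $\Psi(t,\cdot)$ for every $t\ge 0$. The ingredients that make this work are: continuity of each map $\Psi(t,\cdot):Z\to Z$, the semigroup identity $\Psi(s+t,\cdot)=\Psi(s,\Psi(t,\cdot))$, the positive invariance $\Psi([0,+\infty)\times Z)\subset Z$, and the fact that $Z$ is a non-empty convex compact subset of the Banach space $X$.

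First, I would fix $n\ge 1$. Since $Z$ is non-empty, convex, compact in $X$, and $\Psi(1/n,\cdot)$ is a continuous self-map of $Z$ (by positive invariance), Schauder--Tikhonov yields $x_n\in Z$ such that
\begin{equation*}
\Psi(1/n,x_n)=x_n.
\end{equation*}
The semigroup property and an immediate induction then give
\begin{equation*}
\Psi(k/n,x_n)=x_n,\qquad k\in\mathbb{N}\cup\{0\}.
\end{equation*}

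Next, compactness of $Z$ yields a subsequence (still indexed by $n$) and a limit $x_0\in Z$ such that $x_n\to x_0$ in $X$. Given any $t\ge 0$, set $k_n=\lfloor tn\rfloor$, so that $k_n/n\to t$ as $n\to\infty$. On the one hand, the identity above at $k=k_n$ gives
\begin{equation*}
\Psi(k_n/n,x_n)=x_n\longrightarrow x_0.
\end{equation*}
On the other hand, joint continuity of $\Psi$ on $[0,+\infty)\times Z$ (which is part of the definition of a dynamical system) combined with the convergence $(k_n/n,x_n)\to(t,x_0)$ in $[0,+\infty)\times Z$ implies $\Psi(k_n/n,x_n)\to\Psi(t,x_0)$. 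Uniqueness of limits yields $\Psi(t,x_0)=x_0$, which is the required conclusion.

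The main obstacle I anticipate is the passage to the limit in the last step: one needs to know that the map $(t,x)\mapsto\Psi(t,x)$ is well behaved enough so that the convergences $k_n/n\to t$ and $x_n\to x_0$ together imply $\Psi(k_n/n,x_n)\to\Psi(t,x_0)$. If only separate continuity in $x$ (uniformly in $t$ on compact intervals) is available, one must invoke the compactness of $Z$ to upgrade separate continuity to sequential joint continuity along $(t_n,x_n)\to(t,x)$, which is standard but requires the equicontinuity of $\{\Psi(\cdot,x):x\in Z\}$ on bounded time intervals. With that in hand, the rest of the argument is the clean Schauder-plus-diagonal scheme described above.
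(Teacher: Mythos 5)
Your argument is correct and reproduces the standard proof of this fixed-point result; the paper does not prove Theorem~\ref{FPT} but cites Amann (Proposition~22.13) and Gamba--Panferov--Villani for it, and the Schauder-at-discrete-times-plus-compactness scheme you describe is precisely what those sources use. Your concern about joint continuity is not a genuine gap: joint continuity of $\Psi$ on $[0,+\infty)\times Y$ is built into the usual definition of a (semi)dynamical system, and in the paper's concrete application it is supplied by Proposition~\ref{CDP} (Lipschitz dependence on the initial datum, uniformly for $t$ in bounded intervals) combined with continuity in $t$ coming from the integral formulation~\eqref{IVOE}, so the last passage to the limit is safe. If you nevertheless want a version that needs only continuity of $t\mapsto\Psi(t,x_0)$ at a fixed $x_0$, replace $1/n$ by $1/n!$: the fixed-point set $F_n\subset Z$ of $\Psi(1/n!,\cdot)$ is non-empty (Schauder) and compact, the semigroup law gives $\Psi(k/n!,x)=x$ for $x\in F_n$ and all $k\ge 0$, and since $1/(n-1)!=n/n!$ this shows $F_n\subset F_{n-1}$, so the nested intersection $\bigcap_n F_n$ is non-empty; any $x_0$ in it is fixed by $\Psi(q,\cdot)$ for every $q$ in the dense set $\{k/n!:k\ge 0,\ n\ge 1\}$, and continuity in $t$ alone then yields $\Psi(t,x_0)=x_0$ for all $t\ge 0$.
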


 \begin{proof}[Proof of Theorem~\ref{TH3}~(a)]
 We fix $0<\rho_0 < \rho_1$ and define 
 \begin{equation*}
 \mathcal{Z} := \Big\{ y \in Y_1^+: M_0(y) = \rho_0, \ M_1(y) =\rho_1, \ M_m(y) \le \mu_{m}+\rho_1,~~ m > 1 \Big\},
 \end{equation*}
 where $\mu_m$ is defined in Theorem~\ref{TH1}. Clearly $\mathcal{Z}$ is non-empty as it contains $(\rho_1 \delta_{i,1})_{i \ge 1}$ and $\mathcal{Z}$ is convex and compact in $Y_1$. In addition $\mathcal{Z} \subset  \mathcal{S}(\rho_0, \rho_1, \mu_{1+\beta}+\rho_1)$ and it follows from~\eqref{IVOE} and~\eqref{Z1} that, for any $w^{\rm{in}}\in \mathcal{Z}$,
 \begin{equation*}
 	M_0(\Psi(t,w^{\rm{in}})) = M_0(w^{\rm{in}}), \qquad t\ge 0.
 \end{equation*} 
 Consequently, combining Theorem~\ref{TH2} and the above identity implies that $\mathcal{Z}$ is an invariant set for  $\Psi$. Since  $\Psi$ is a dynamical system on $\mathcal{S}(\rho_0, \rho_1, \mu_{1+\beta}+\rho_1)$, it follows from Theorem~\ref{FPT} that there is $w^{\star}\in \mathcal{Z}$ such that $\Psi(t, w^{\star}) = w^{\star}$ for all $t\ge 0$, i.e, $w^{\star}$ is a stationary solution to~\eqref{NLDCBE}. Moreover $w^{\star}$ satisfies the properties listed in Theorem~\ref{TH3}~(a) since it belongs to $\mathcal{Z}$. 
 \end{proof}
 
We now turn to the second statement in Theorem~\ref{TH3} and begin with a preliminary result establishing a time monotonicity property of $M_0(w)-w_1$ for solutions to~\eqref{NLDCBE}--\eqref{NLDCBEIC} when the daughter distribution function $B$ satisfies~\eqref{Z3}.

\begin{lemma}\label{LemZ}
Assume that the kinetic coefficients $(a_{i,j})$ and $(B_{j,k}^i)$ satisfy~\eqref{ASYMM}, \eqref{LMC} and~\eqref{Z3}. Let $w^{\rm{in}}\in Y_1^+$ and consider a solution $w$ to \eqref{NLDCBE}--\eqref{NLDCBEIC} on $[0,+\infty)$ in the sense of Definition~\ref{DEF1}. Then
\begin{equation*}
	M_0(w(t)) - w_1(t) \ge M_0(w^{\rm{in}}) - w_1^{\rm{in}}, \qquad t\ge 0.
\end{equation*}
\end{lemma}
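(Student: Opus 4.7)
The plan is to show that $t\mapsto M_0(w(t))-w_1(t) = \sum_{i\ge 2} w_i(t)$ is non-decreasing by applying the integral formulation \eqref{IVOE} with the bounded test sequence $\mu_i = \mathbf{1}_{i\ge 2}$.

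First, for any $n\ge 2$, I would sum the integral identity \eqref{IVOE} over $i\in\{2,\ldots,n\}$. Since this involves only finitely many terms, each in $L^1(0,t)$ by Definition~\ref{DEF1}, Fubini applies immediately. After changing the summation variable $p = j-k$ in the gain term and symmetrizing the loss term using $a_{j,k}=a_{k,j}$, I obtain
\begin{equation*}
\sum_{i=2}^n \bigl(w_i(t)-w_i^{\mathrm{in}}\bigr) = \int_0^t \tfrac{1}{2}\sum_{p,k\ge 1} a_{p,k}\, w_p(\tau)\, w_k(\tau)\, \Psi_n(p,k)\, d\tau,
\end{equation*}
where
\begin{equation*}
\Psi_n(p,k) := \sum_{i=2}^{\min(n,\,p+k-1)} B_{p,k}^i - \mathbf{1}_{2\le p\le n} - \mathbf{1}_{2\le k\le n}.
\end{equation*}

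Next, a short case analysis shows that the pointwise limit $\Psi_\infty(p,k) = \sum_{i=2}^{p+k-1} B_{p,k}^i - \mathbf{1}_{p\ge 2} - \mathbf{1}_{k\ge 2}$ is non-negative for every $p,k\ge 1$ under hypothesis~\eqref{Z3}: for $p=k=1$ the empty sum gives $\Psi_\infty = 0$; for $p=1,k\ge 2$ (and symmetrically), the first inequality of~\eqref{Z3} yields $\Psi_\infty = \sum_{i=2}^k B_{1,k}^i - 1 \ge 0$; and for $p,k\ge 2$, the second inequality of~\eqref{Z3} yields $\Psi_\infty = \sum_{i=2}^{p+k-1} B_{p,k}^i - 2 \ge 0$. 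This is the heart of the argument and directly reflects why the structural assumption~\eqref{Z3} forces the desired sign.

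Finally, I would pass to the limit $n\to\infty$. The left-hand side converges to $(M_0(w(t))-w_1(t))-(M_0(w^{\mathrm{in}})-w_1^{\mathrm{in}})$ by monotone convergence applied separately to $w(t)$ and $w^{\mathrm{in}}$, both limits being finite because $w(t),w^{\mathrm{in}}\in Y_1^+$ forces $M_0\le M_1<\infty$. The main obstacle is passing to the limit on the right-hand side, since $\Psi_n$ is not monotone in $n$ and may take negative values for intermediate $n$. My plan to handle this is to split the double sum into a \emph{core} region $\{(p,k):p+k-1\le n\}$, on which $\Psi_n=\Psi_\infty\ge 0$ and monotone convergence suffices, and a \emph{tail} region $\{p+k-1>n\}$, on which $|\Psi_n(p,k)|$ is bounded by $\sum_{i=1}^{p+k-1} B_{p,k}^i + 2 \le p+k+2$ thanks to \eqref{LMC}. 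The tail contribution vanishes as $n\to\infty$ by dominated convergence, using the integrability of $\sum_j a_{i,j}w_j$ and the bilinear gain term provided by Definition~\ref{DEF1}. Combining the two pieces gives
\begin{equation*}
\bigl(M_0(w(t))-w_1(t)\bigr) - \bigl(M_0(w^{\mathrm{in}})-w_1^{\mathrm{in}}\bigr) = \int_0^t \tfrac{1}{2}\sum_{p,k\ge 1} a_{p,k}\, w_p(\tau)\, w_k(\tau)\, \Psi_\infty(p,k)\, d\tau \ge 0,
\end{equation*}
which is the claimed monotonicity.
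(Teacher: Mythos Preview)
Your approach is correct and essentially the same as the paper's: both reduce the claim to the non-negativity of $\Psi_\infty(p,k)=\sum_{i=2}^{p+k-1}B_{p,k}^i-\mathbf{1}_{p\ge 2}-\mathbf{1}_{k\ge 2}$ via the identical three-case analysis from~\eqref{Z3}. The only difference is organizational---the paper derives the evolution of $w_1$ (from~\eqref{IVOE} with $i=1$) and of $M_0$ separately and then subtracts, whereas you truncate $\sum_{i\ge 2}w_i$ directly and pass to the limit; the resulting integral identity and its sign are the same.
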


\begin{proof}
	On the one hand, it follows from~\eqref{IVOE} (for $i=1$) that, for $t> 0$, 
	\begin{align*}
		& w_1(t) - w^{\rm{in}} + \int_0^t \sum_{j=1}^\infty a_{1,j} w_1(\tau) w_j(\tau) d\tau \\
		& \qquad = \frac{1}{2} \int_0^t \sum_{j=2}^\infty \sum_{k=1}^{j-1} B_{j-k,k}^1 a_{j-k,k} w_{j-k}(\tau) w_k(\tau) d\tau \\
		& \qquad = \frac{1}{2} \int_0^t \sum_{k=1}^\infty \sum_{j=k+1}^{\infty} B_{j-k,k}^1 a_{j-k,k} w_{j-k}(\tau) w_k(\tau) d\tau \\
		& \qquad = \frac{1}{2} \int_0^t \sum_{j=1}^\infty \sum_{k=1}^{\infty} B_{j,k}^1 a_{j,k} w_j(\tau) w_k(\tau) d\tau \\
		& \qquad = \frac{1}{2} \int_0^t \left[ B_{1,1}^1 a_{1,1} w_1(\tau)^2 + \sum_{j=2}^\infty \sum_{k=2}^{\infty} B_{j,k}^1 a_{j,k} w_j(\tau) w_k(\tau) \right] d\tau \\
		& \qquad\qquad + \frac{1}{2} \int_0^t \left[ \sum_{j=2}^{\infty} B_{j,1}^1 a_{j,1} w_j(\tau) w_1(\tau) + \sum_{k=2}^{\infty} B_{1,k}^1 a_{1,k} w_1(\tau) w_k(\tau) \right] d\tau.
	\end{align*}
	Owing to~\eqref{LMC11} and the symmetry properties of $(B_{j,k}^i)$, we further obtain
	\begin{equation}
		\begin{split}
		w_1(t) - w^{\rm{in}} & = \int_0^t \sum_{j=2}^\infty \big( B_{j,1}^1 - 1 \big) a_{1,j} w_1(\tau) w_j(\tau) d\tau \\
		& \qquad + \frac{1}{2} \int_0^t \sum_{j=2}^\infty \sum_{k=2}^{\infty} B_{j,k}^1 a_{j,k} w_j(\tau) w_k(\tau) d\tau.
		\end{split} \label{Z8}
	\end{equation}
	On the other hand, we infer from~\eqref{LMC11}, \eqref{IVOE} and the symmetry properties of $(B_{j,k}^i)$ that, for $t>0$,
	\begin{align*}
		M_0(w(t)) - M_0(w^{\rm{in}}) & = \frac{1}{2} \int_0^t \sum_{j=1}^\infty \sum_{k=1}^\infty \left( \sum_{i=1}^{j+k-1} B_{j,k}^i - 2 \right) a_{j,k} w_j(\tau) w_k(\tau) d\tau \\
		& = \frac{1}{2}\int_0^t \big( B_{1,1}^1-2 \big) w_1(\tau)^2 d\tau \\
		& \qquad + \frac{1}{2} \int_0^t \sum_{k=2}^\infty \left( \sum_{i=1}^k B_{1,k}^i - 2 \right) a_{1,k} w_1(\tau) w_k(\tau) d\tau \\
		& \qquad + \frac{1}{2}\int_0^t \sum_{j=2}^\infty \left( \sum_{i=1}^j B_{j,1}^i - 2 \right) a_{j,1} w_j(\tau) w_1(\tau) d\tau \\
		& \qquad + \frac{1}{2} \int_0^t \sum_{j=2}^\infty \sum_{k=2}^\infty \left( \sum_{i=1}^{j+k-1} B_{j,k}^i - 2 \right) a_{j,k} w_j(\tau) w_k(\tau)  d\tau \\
		& = \int_0^t \sum_{j=2}^\infty \left( \sum_{i=1}^j B_{j,1}^i - 2 \right) a_{1,j} w_j(\tau) w_1(\tau) d\tau \\
		& \qquad + \frac{1}{2} \int_0^t \sum_{j=2}^\infty \sum_{k=2}^\infty \left( \sum_{i=1}^{j+k-1} B_{j,k}^i - 2 \right) a_{j,k} w_j(\tau) w_k(\tau)  d\tau.
	\end{align*}
	Substracting~\eqref{Z8} from the above identity leads us to
	\begin{align*}
		& M_0(w(t)) - M_0(w^{\rm{in}}) -w_1(t) + w^{\rm{in}} \\
		& \qquad = \int_0^t \sum_{j=2}^\infty \left( \sum_{i=2}^j B_{j,1}^i - 1 \right) a_{1,j} w_j(\tau) w_1(\tau) d\tau \\
		& \qquad + \frac{1}{2} \int_0^t \sum_{j=2}^\infty \sum_{k=2}^\infty \left( \sum_{i=2}^{j+k-1} B_{j,k}^i - 2 \right) a_{j,k} w_j(\tau) w_k(\tau)  d\tau,
	\end{align*}
	from which Lemma~\ref{LemZ} readily follows by~\eqref{Z3}.
\end{proof}

We are now in a position to complete the proof of Theorem~\ref{TH3}.

\begin{proof}[Proof of Theorem~\ref{TH3}~(b)]
	We fix $0<\rho_0<\rho_1$, $\eta\in (0,\min\{\frac{\rho_1}{2} , \rho_1-\rho_0\})$ and define
	 \begin{equation*}
		\mathcal{Z} := \left\{ y \in Y_1^+: 
		\begin{array}{l}
			M_0(y) \ge \max\{y_1 + \eta , \rho_0\}, \\ 
			M_1(y) =\rho_1, \\ 
			M_m(y) \le \mu_{m}+\rho_1 + 2^m,~~ m > 1 
		\end{array} \right\},
	\end{equation*}
	where $\mu_m$ is defined in Theorem~\ref{TH1}. First, the sequence $(\rho_1-2\eta,\eta,0,\cdots)$ belongs to $\mathcal{Z}$ so that it is non-empty, as well as convex and compact in $Y_1^+$. We next infer from Theorem~\ref{TH2} that $\mathcal{Z}$ is positively invariant for $\Psi$. We then argue as in the proof of Theorem~\ref{TH3}~(a) and conclude that there is a stationary solution $w^\star$ to~\eqref{NLDCBE} in $\mathcal{Z}$. In particular, $w^\star$ satisfies~\eqref{Z4} since $M_0(w^\star) \ge w_1^\star + \eta > w_1^\star$.
\end{proof}

\begin{remark}
	In principle, the stationary solution to~\eqref{NLDCBE} constructed in the previous proof depends on $\eta$ but it is unclear whether different choices of $\eta$ lead to different stationary solutions.
\end{remark}

 	\subsection*{Funding} The work of the authors is partially supported by the Indo-French Centre for Applied Mathematics (MA/IFCAM/19/58) within the project Collision-Induced Fragmentation and Coagulation: Dynamics and Numerics. AKG wishes to thank Science and Engineering Research Board (SERB), Department of Science and Technology (DST), India, for their funding support through the MATRICS  project MTR/2022/000530 for completing this work. MA would like to thank the University Grant Commission (UGC),
India for granting the Ph.D. fellowship through Grant No. 416611.
 	\subsection*{Acknowledgements}  Part of this work was done while PhL enjoyed the hospitality of Department of Mathematics, Indian Institute of Technology Roorkee, India.

\end{document}